\newtheorem{theo}{Theorem}
\newtheorem{coro}{Corollary}
\newtheorem{prop}{Proposition}
\newtheorem{lemm}{Lemma}
\theoremstyle{remark}
\newtheorem{rema}{\bf Remark}
\newtheorem{example}{\bf Example}
\begin{document}

\title{On quasiconformal equivalence of Schottky regions }

\author{Rub\'en A. Hidalgo}
\address{Departamento de Matem\'atica y Estad\'{\i}stica, Universidad de La Frontera. Temuco, Chile}
\email{ruben.hidalgo@ufrontera.cl}

\thanks{Partially supported by Project Fondecyt 1230001}

\subjclass[2010]{Primary 30F10; 30C62; 30F40}
\keywords{Kleinian group, quasiconformal homeomorphisms}

%%%%%%%%%%%%%%%%%
%%%%%%%%%%%%%%%%%

\begin{abstract}
In a recent paper, H. Shiga proved that the regions of discontinuity of any two Schottky groups of ranks at least two are quasiconformally equivalent. In this paper, we provide an alternative proof of such a fact. Our approach permits us to discuss the quasiconformal equivalence of regions of discontinuity of Schottky type groups in terms of their signatures.

\end{abstract}

\maketitle

%%%%%%%%%%%%%%%%%
%%%%%%%%%%%%%%%%%
\section{Introduction}
Two (connected) Riemann surfaces are said to be quasiconformally equivalent if there is a quasiconformal homeomorphism between them. Associated to each Riemann surface $S$ is its 
Teichm\"uller space $T(S)$, which consists of equivalence classes of quasiconformal homeomorphisms $\phi:S \to R$, where $R$ is a Riemann surface (two of them $\phi:S \to R$ and $\psi:S \to R'$ are equivalent if there is biholomorphism $\eta:R \to R'$ such that $\psi^{-1} \circ  \eta \circ   \phi$ is isotopic to the identity map). In this way, given two homeomorphic Riemann surfaces, one is interested in determining if they are or not quasiconformally equivalent. It is well known that two topologically finite Riemann surfaces are quasiconformally equivalent if and only if they have the same genus, the same number of punctures, and the same number of boundary components. On the other hand, given two topologically infinite Riemann surfaces, it is a difficult task to determine if they are or not quasiconformally equivalent.

By the classification in \cite{Ker,Ian}, every planar infinite type surface is homeomorphic to the complement of a closed subset of a Cantor set in the sphere. In this way, interesting examples of infinite type Riemann surfaces are provided by deleting a Cantor set from the Riemann sphere $\widehat{\mathbb C}$.
In \cite{Shiga,Shiga2}, H. Shiga studies the above quasiconformal problem for these complements of Cantor sets. 

Examples of such kind of planar Riemann surfaces are given by the region of discontinuity of a finitely Kleinian group (i.e., a discrete subgroup of M\"obius transformations) whose limit set is totally disconnected. Some Kleinian groups with such a property are the so-called {\it Schottky type (ST) groups of signature} $(\alpha,\beta,\gamma)$, where $\alpha, \beta, \gamma$ are non-negative integers such that $\alpha+\beta+\gamma \geq 2$ (see Section \ref{Sec:STgroups}). 
 If $\beta=\gamma=0$, then they are also called {\it Schottky groups of rank} $\alpha$. Two Schottky type groups are quasiconformally conjugated if and only if they have the same signature.
As a consequence of Maskit's classification of function groups \cite{Maskit:function2,Maskit:function3,Maskit:function4}, every finitely generated Kleinian group whose limit set is totally disconnected contains, as a finite index subgroup, an ST group (so they have the same region of discontinuity). In this way, to study when the regions of discontinuity of two finitely generated Kleinian groups with totally disconnected limit sets are or not quasiconformally equivalent, one only needs to restrict to the case of ST groups.

In \cite{Shiga}, H. Shiga proved that the regions of discontinuity of any two Schottky groups of ranks at least two (they might have different ranks) are quasiconformally equivalent. In the same paper, he observed that, for $\alpha \geq 2$, the regions of discontinuity of ST groups of signatures $(\alpha-1,1,0)$  and $(\alpha,0,0)$ cannot be quasiconformally equivalent.

In this paper, which might be considered a complement to Shiga's paper, we provide conditions in terms of the signatures for 
the regions of discontinuity of any two ST groups to be quasiconformally equivalent. Our method, applied to Schottky groups, permits to recover Shiga's quasiconformal rigidity theorem for Schottky groups. Two triples of non-negative integers $(\alpha_{1},\beta_{1},\gamma_{1})$ and $(\alpha_{2},\beta_{2},\gamma_{2})$ are called {\it quasiconformally equivalent}, denoted this by the symbol $(\alpha_{1},\beta_{1},\gamma_{1}) \cong_{QC} (\alpha_{2},\beta_{2},\gamma_{2})$, if the regions of discontinuity of ST groups of these corresponding signatures are quasiconformally equivalent. The main result is the following.

\begin{theo}\label{Main}
Let $\Gamma$ be an ST group of signature $(\alpha,\beta,\gamma)$, where $\alpha+\beta+\gamma \geq 2$. Then
\begin{enumerate}
\item $(\alpha,0,0) \cong_{QC} (2,0,0)$.
\item If $\beta \geq 1$, then $(\alpha,\beta,0) \cong_{QC} (0,2,0)$.
\item If $\gamma \geq 1$, then $(\alpha,0,\gamma) \cong_{QC} (0,0,2)$.
\item The signatures $(2,0,0)$, $(0,2,0)$ and $(0,0,2)$ are pairwise quasiconformally non-equivalent.
\item If $\beta,\gamma \geq 1$, then $(\alpha,\beta,\gamma) \cong_{QC} (0,1,1)$.
\end{enumerate}
\end{theo}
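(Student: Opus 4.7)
The plan is to combine explicit quasiconformal constructions for the positive assertions (1)--(3) and (5) with quasiconformal invariants of $\Omega$ for the rigidity part (4). Throughout, the main tool is the standard tree-like tessellation of $\Omega(\Gamma)$ by the $\Gamma$-translates of a Klein--Maskit fundamental polygon $D_\Gamma$: a region in $\widehat{\mathbb{C}}$ whose boundary consists of $\alpha$ pairs of disjoint circles paired by loxodromic maps, $\beta$ pairs of tangent circles paired by parabolic maps, and $\gamma$ arcs involved in elliptic identifications. The combinatorics of this tessellation is encoded by the Cayley tree of $\Gamma$, and passing from the source group to the target group reduces, locally, to matching finite subtrees of two such trees.

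For the positive equivalences, I would follow Shiga's template. Fix a target ST group $\Gamma_0$ of the canonical signature $(2,0,0)$, $(0,2,0)$, $(0,0,2)$, or $(0,1,1)$, respectively, and group finitely many neighbouring tiles of the $\Gamma_0$-tessellation into a ``composite tile'' combinatorially isomorphic to $D_\Gamma$: for instance, for part (1), a rank-$\alpha$ Schottky tile is matched with a composite union of rank-$2$ tiles chosen to carry the correct number of boundary circles. One then defines a $K$-quasiconformal homeomorphism $D_\Gamma\to$ composite tile compatible with all boundary identifications, and extends equivariantly across the entire tree. Since only finitely many combinatorial types of tile appear (up to M\"obius normalization), the dilatation $K$ stays uniformly bounded and the extension is globally $K$-quasiconformal. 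Parts (2), (3), and (5) require only modest adjustments to accommodate tangent-circle and elliptic-arc pairings in the boundary matching.

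For the rigidity part (4), one exhibits quasiconformal invariants of $\Omega$ sensitive to the signature. The first invariant is the presence of \emph{cusps}: boundary points $p\in\partial\Omega$ with a deleted neighborhood in $\Omega$ conformally equivalent to a punctured disk. Cusps are a quasiconformal invariant of $\Omega$ and occur precisely when $\beta\geq 1$, separating $(0,2,0)$ from both $(2,0,0)$ and $(0,0,2)$. To separate $(0,0,2)$ from $(2,0,0)$ one needs a more delicate invariant; the distinguishing feature should be the existence in $\Omega$ of distinguished interior points (the elliptic fixed points) to which the limit set accumulates with a local rotational symmetry that forces a quasiconformally invariant local conformal feature, absent in the pure Schottky case.

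The hardest step I anticipate is the construction of this second invariant for part (4). In both $(0,0,2)$ and $(2,0,0)$ the region $\Omega$ is a planar complement of a Cantor set without cusps, and the elliptic rotational structure of the $(0,0,2)$ case is a group-theoretic property that must be shown to leave an intrinsic, quasiconformally invariant trace on $\Omega$ itself. Candidate invariants include moduli of annuli around the distinguished interior points, rigidity of finite-order conformal self-maps of $\Omega$, or a direct analysis of the quasiconformal behavior of the local self-similarity of the limit set. The positive assertions, while delicate, follow standard quasiconformal templates once the tessellation matching is carried out.
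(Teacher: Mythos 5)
There is a genuine gap, and it starts with a misreading of the signature. In an ST group of signature $(\alpha,\beta,\gamma)$ the integer $\gamma$ counts \emph{rank-two parabolic} subgroups $\langle C_t,D_t\rangle$ (two commuting parabolics); ST groups are torsion-free, so there are no elliptic elements, no ``arcs involved in elliptic identifications'' in the fundamental domain, no elliptic fixed points in $\Omega$, and no local rotational symmetry. This misconception infects your boundary matching for parts (3) and (5), and it destroys the entire basis of your proposed second invariant for part (4): every candidate you list (distinguished interior points, finite-order conformal self-maps, local rotational self-similarity) refers to structure that does not exist. The separation of $(0,0,2)$ from $(2,0,0)$ and $(0,2,0)$ — which you yourself flag as the step you cannot complete — is done in the paper by a concrete invariant you do not find: the limit sets of ST groups of signatures $(2,0,0)$ and $(0,2,0)$ are contained in quasicircles, whereas the limit set of a $(0,0,2)$ group is not (near a rank-two cusp it accumulates from a two-dimensional lattice of directions). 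Your first invariant also needs care: a precisely invariant horodisk at a rank-one parabolic fixed point $p$ is a disk in $\Omega$ with $p$ on its boundary, not a punctured-disk neighborhood of $p$ in $\Omega$ (no limit point of a Cantor set admits one), so ``cusp'' must be formulated via the extension of the quasiconformal map to the sphere and the local shape of $\Lambda$, as in Shiga's argument. As written, part (4) is not proved.

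For the positive parts your tile-matching plan is a legitimately different route, but it is only a plan. The paper never constructs quasiconformal maps by hand: it builds, via permutation representations $\Theta:\Gamma\to{\mathfrak S}_n$, explicit finite-index subgroups of prescribed signatures (Constructions A, B, C), uses that $\Omega_K=\Omega_\Gamma$ for finite-index $K\leq\Gamma$ together with the fact that same-signature ST groups are quasiconformally conjugate, and then chains signatures together (e.g.\ $(1+n(\hat\alpha-1),n\hat\beta,n\hat\gamma)\leq_n(\hat\alpha,\hat\beta,\hat\gamma)$ gives part (1) immediately, while part (5) requires the iterative reductions of Propositions \ref{Lem3}--\ref{Lem7}). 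This covering-theoretic bookkeeping is exactly what replaces your ``modest adjustments'': matching a fundamental domain carrying $\alpha$ loxodromic handles, $\beta$ rank-one cusps and $\gamma$ rank-two cusps against composites of $(0,1,1)$-tiles while respecting all three types of identification and keeping the dilatation uniform is the actual content of parts (2), (3) and (5), and none of that combinatorics is carried out; note also that part (4) shows the cusp structure is a genuine obstruction, so it cannot simply be absorbed by regrouping tiles. You would need either to execute the tile matching in detail or to adopt the finite-index subgroup strategy.
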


\begin{coro}
There are exactly four non-equivalent Teichm\"uller spaces of Cantor limit sets of Kleinian groups.
\end{coro}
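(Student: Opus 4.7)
The plan is to split the argument into the four positive equivalences (1), (2), (3), (5) and the non-equivalence (4). For each signature $(\alpha,\beta,\gamma)$ I would fix a standard ST group $\Gamma_{0}$ realized by a Klein--Maskit fundamental polygon $F_{0} \subset \widehat{\mathbb C}$ bounded by pairwise disjoint circles paired by loxodromic, parabolic-tangent, or elliptic generators according to the triple. The region of discontinuity $\Omega(\Gamma_{0})$ is then tiled by $\Gamma_{0}$-translates of $F_{0}$, endowing it with the structure of an infinite tree of spherical pieces. Since any two ST groups with the same signature are quasiconformally conjugate, it suffices to compare these standard models.

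The heart of the positive equivalences is a \emph{local quasiconformal surgery}: I would show that on any two disjoint round disks in $\widehat{\mathbb C}$, a loxodromic pairing can be converted into a parabolic-tangent pairing or into an elliptic pairing of the same two disks via a quasiconformal homeomorphism of $\widehat{\mathbb C}$ supported in a neighborhood of the two disks, with dilatation bounded by a constant depending only on the modulus of the separating annulus. Extending this local surgery equivariantly through the Maskit--Klein combination structure of $\Omega(\Gamma_{0})$ and iterating over the tree of tiles, I obtain: $(\alpha,0,0) \cong_{QC} (2,0,0)$ by successive elimination of loxodromic generators down to two (recovering Shiga's theorem); $(\alpha,\beta,0) \cong_{QC} (0,2,0)$ whenever $\beta \geq 1$ by absorbing all loxodromic generators into a parabolic one and then normalizing the parabolic count to two; dually (3); and $(\alpha,\beta,\gamma) \cong_{QC} (0,1,1)$ when $\beta,\gamma \geq 1$ by combining both absorptions. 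Following \cite{Shiga}, a single normalized surgery suffices once each tile is placed in a standard geometric configuration, so the total dilatation remains uniformly bounded over the infinite tiling.

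For the non-equivalence part (4), I would exhibit quasiconformal invariants of the planar Riemann surface $\Omega$ distinguishing the three base signatures. The signature $(0,2,0)$ is detected by the presence of \emph{parabolic cusp ends}: in a round neighborhood of a parabolic fixed point, $\Omega$ contains an accumulating chain of horodisks of uniformly bounded modulus, a quasiconformal invariant feature absent from $(2,0,0)$ and $(0,0,2)$. To separate $(2,0,0)$ from $(0,0,2)$, I would use the presence in the $(0,0,2)$ case of a discrete $\Gamma$-invariant set of points in $\Omega$ with non-trivial cyclic stabilizer, which manifests as an intrinsic conformal symmetry of $\Omega$ on arbitrarily small neighborhoods of those points (equivalently, as a natural branched-covering structure of $\Omega$ over a smaller planar orbifold). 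Since each of the three base signatures fails at least one of these invariants, they are pairwise inequivalent.

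The main obstacle will be the uniform control of the dilatation in the iterated surgery argument: each local replacement is elementary, but preserving boundedness of the total dilatation over the infinite tree requires both a carefully normalized surgery and a compactness argument showing that all tiles fall into a single bounded family up to M\"obius normalization. A secondary, subtler difficulty is formulating the invariant separating $(2,0,0)$ from $(0,0,2)$ as a property of $\Omega$ intrinsically, rather than of the pair $(\Omega,\Gamma)$, since both limit sets are Cantor sets of bounded geometry with no parabolic points; the invariant must therefore be extracted from the conformal geometry of $\Omega$ near its Cantor boundary without reference to the particular group action realizing it.
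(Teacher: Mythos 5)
Your central device---a local quasiconformal surgery converting a loxodromic pairing of two disks into a parabolic-tangent pairing, extended equivariantly over the tree of tiles with uniformly bounded dilatation---cannot work as stated. If such a surgery existed, applying it to a single generator of a rank-$\alpha$ Schottky group would yield $(\alpha,0,0)\cong_{QC}(\alpha-1,1,0)$, which Shiga proved is false (this is recalled in the paper right before Remark \ref{Observa4}). The obstruction lives exactly where your uniformity argument is weakest: the surgered tiles accumulate on the Cantor limit set, and a tile-by-tile bound on the dilatation does not produce a quasiconformal homeomorphism of the planar domains $\Omega$ unless the map behaves quasiconformally across the limit set; near a parabolic fixed point the conformal structure of $\Omega$ (chains of cusp annuli) genuinely differs from the loxodromic picture, and it is precisely this extension that fails. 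You give no criterion for when the surgery is admissible, so your argument cannot distinguish the true equivalences (1), (2), (3), (5) from the false ones excluded by (4). Moreover, ``successive elimination of loxodromic generators down to two'' is not a local surgery at all: deleting a generator replaces $\Gamma$ by a proper free factor and strictly enlarges the region of discontinuity. The paper proceeds entirely differently: it constructs explicit finite-index subgroups via homomorphisms $\Theta:\Gamma\to{\mathfrak S}_{n}$, uses that a finite-index subgroup has the \emph{same} region of discontinuity, and that ST groups of equal signature are quasiconformally conjugate (Theorem \ref{teo1}); equivalences between distinct signatures are then obtained by exhibiting common finite-index signatures (Lemma \ref{Lem1}), never by deforming one group into another.

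Your treatment of the non-equivalence part rests on a misreading of the signature: the $\gamma$ entries count rank-two \emph{parabolic} subgroups $\langle C_{t},D_{t}\rangle$ (two commuting parabolics contributing genus to $\Omega_{\Gamma}/\Gamma$), not elliptic cyclic groups. ST groups are torsion-free, $\Omega$ contains no points with nontrivial stabilizer, and there is no branched-covering structure; so the invariant you propose to separate $(0,0,2)$ from $(2,0,0)$ is vacuous, and you concede you cannot make it intrinsic. The paper's Remark \ref{Observa4} separates $(0,0,2)$ from the other two base signatures by the fact that the limit set of a signature-$(0,0,2)$ group is not contained in a quasicircle while those of $(2,0,0)$ and $(0,2,0)$ are, the limit set being quasiconformally attached to $\Omega$ because a Cantor set is removable for quasiconformal maps. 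Finally, to obtain the corollary as stated you must also reduce arbitrary Kleinian groups with Cantor limit set to ST groups (via Maskit's classification of function groups, as every such group contains an ST group of finite index with the same region of discontinuity); your proposal only ever considers ST groups.
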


The main idea in the proof of Theorem \ref{Main} is a combination of the following observations. First, any two ST groups of the same signature are quasiconformally conjugated, in particular, their regions of discontinuity are quasiconformally equivalent. If two ST groups are finite index subgroups of a common ST group, then the three of them have the same region of discontinuity. If two ST groups of different signatures contain, as a finite index, ST groups of the same signature, then their regions of discontinuity are quasiconformally equivalent. In Section \ref{Sec:Construcciones}, we construct finite index subgroups which are used with the above observations.

%%%%%%%%%%%%%%%%%
%%%%%%%%%%%%%%%%%
\section{Preliminaries}
In this section, we recall some definitions and properties of Kleinian groups we will need. Generalities on Kleinian groups can be found, for instance, in the classical book \cite{Maskit:book}.

%%%%%%%%%%%%%%%%%
\subsection{Kleinian groups}
A Kleinian group is a discrete subgroup $\Gamma$ of ${\rm PSL}_{2}({\mathbb C})$ (seen as the group of conformal automorphisms of the Riemann sphere $\widehat{\mathbb C}$). Its region of discontinuity $\Omega_{\Gamma}$ is the open set (which it might be empty) consisting of the points of $\widehat{\mathbb C}$ over which $\Gamma$ acts discontinuously. The complement $\Lambda_{\Gamma}=\widehat{\mathbb C} \setminus \Omega_{\Gamma}$ is called its limit set (if it is not finite, then $\Gamma$ is called non-elementary). 
If $K$ is a finite index subgroup of a Kleinian group $\Gamma$, then $\Omega_{K}=\Omega_{\Gamma}$ \cite[Proposition E.10]{Maskit:book}. The same result is still valid if we assume $\Gamma$ to be non-elementary and $K$ to be a non-trivial normal subgroup.

If $\Gamma$ is a finitely generated Kleinian group for which there is a $\Gamma$-invariant connected component of $\Omega_{\Gamma}$, then it is called a function group.
 A description of function groups, in terms of the Klein-Maskit combination theorems  \cite{Maskit:Comb,Maskit:Comb4}, was provided by Maskit in \cite{Maskit:construction, Maskit:function2, Maskit:function3, Maskit:function4}.

%%%%%%%%%%%%%%%%%%
\subsection{Schottky type groups}\label{Sec:STgroups}
Let $\alpha, \beta, \gamma \geq 0$ be integers. A Kleinian group $\Gamma$ is called a {\it Schottky type group} (or ST group) of signature $(\alpha,\beta,\gamma)$ if it can be constructed as a free product (in the sense of the Klein-Maskit combination theorem \cite{Maskit:Comb,Maskit:Comb4}) of:
\begin{enumerate}
\item  $\alpha$ cyclic groups generated by loxodromic transformations, say $\langle A_{1} \rangle,$
$ \ldots, \langle A_{\alpha} \rangle$ (if $\alpha \geq 1$);

\item $\beta$ rank one parabolic groups (cyclic groups generated by a parabolic transformation), say $\langle B_{1} \rangle, \ldots, \langle B_{\beta} \rangle$ (if $\beta \geq 1$), and 

\item $\Gamma$ rank two parabolic groups (i.e, each one generated by a pair of commuting parabolic transformations), say $\langle C_{1},D_{1}\rangle, \ldots, \langle C_{\gamma}, D_{\gamma} \rangle$ (if $\gamma \geq 1$). 
\end{enumerate}

We say that the above generators are {\it geometrical generators} of $\Gamma$.

The following properties hold.
\begin{enumerate}
\item If $\alpha=\beta=\gamma=0$, then $\Gamma=\{I\}$ and $\Omega_{\Gamma}=\widehat{\mathbb C}$.
\item If $\alpha+\beta+\gamma=1$, then $\#(\Lambda_{\Gamma}) \in \{1,2\}$ (equal to $1$ exactly for $\alpha=0$).
\item If $\alpha+\beta+\gamma \geq 2$, then $\Lambda_{\Gamma}$ is a Cantor set.
\item $\Omega_{\Gamma}$ is connected and $\Omega_{\Gamma}/\Gamma$ is an analytically finite Riemann surface of genus $\alpha+\gamma$ with exactly $2\beta$ punctures.
\end{enumerate}

\begin{rema}\label{Obs1}
In \cite{Chuckrow}, these groups are called groups of {\it extended Schottky type} $(\alpha,\beta,\gamma)$. A Schottky type group of signature $(\alpha,0,0)$ is a 
Schottky group of rank $\alpha$. 
\end{rema}

A consequence of the geometric description of ST groups by the Klein-Maskit combination theorem is the following fact.

\begin{theo}\label{teo1}
Let $\Gamma_{j}$ be a ST group of signature $(\alpha_{j},\beta_{j},\gamma_{j})$, where $j \in \{1,2\}$. Then there is a quasiconformal homeomorphism $f:\widehat{\mathbb C} \to \widehat{\mathbb C}$ such that $f \Gamma_{1} f^{-1}=\Gamma_{2}$ if and only if $(\alpha_{1},\beta_{1},\gamma_{1})=(\alpha_{2},\beta_{2},\gamma_{2})$. In particular, in such a situation, $f$ provides a quasiconformal homeomorphism between $\Omega_{\Gamma_{1}}$ and $\Omega_{\Gamma_{2}}$.
\end{theo}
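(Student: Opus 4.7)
\emph{Plan.} The ``only if'' direction is the easy one. A quasiconformal conjugation preserves the dynamical type of each element (loxodromic to loxodromic, parabolic to parabolic), because these are determined by the number and nature of the fixed points, and it preserves the rank of abelian subgroups of parabolics. Hence $f$ carries the rank-two parabolic subgroups of $\Gamma_{1}$ to those of $\Gamma_{2}$, and the numbers of conjugacy classes of maximal rank-two parabolic subgroups (namely $\gamma_{1}$ and $\gamma_{2}$) coincide. The map $f$ descends to a quasiconformal homeomorphism $\Omega_{\Gamma_{1}}/\Gamma_{1}\to \Omega_{\Gamma_{2}}/\Gamma_{2}$, and by property (4) of Section \ref{Sec:STgroups} the genera $\alpha_{j}+\gamma_{j}$ and the puncture counts $2\beta_{j}$ must agree; combined with $\gamma_{1}=\gamma_{2}$, this forces $(\alpha_{1},\beta_{1},\gamma_{1})=(\alpha_{2},\beta_{2},\gamma_{2})$.

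For the ``if'' direction, the plan is to exploit the Klein--Maskit combination description of ST groups directly. Each $\Gamma_{j}$ admits a standard Schottky-type fundamental set $D_{j}\subset \Omega_{\Gamma_{j}}$ of the form $D_{j}=\widehat{\mathbb C}\setminus \bigcup_{i} \mathring{F}_{j}^{(i)}$, where the $F_{j}^{(i)}$ are closed topological disks with pairwise disjoint interiors arranged according to the signature: $\alpha$ pairs of Jordan-bounded disks paired by loxodromic generators $A_{k}$, $\beta$ pairs of mutually tangent disks paired by rank-one parabolic generators $B_{k}$ (the tangency being the parabolic fixed point), and $\gamma$ quadruples of pairwise tangent disks serving as fundamental pieces for the rank-two parabolic subgroups $\langle C_{k},D_{k}\rangle$. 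First I would produce, by hand, a quasiconformal homeomorphism $\phi_{0}:D_{1}\to D_{2}$ compatible with the side pairings: send tangency points to tangency points, choose any quasisymmetric parametrization between paired boundary arcs so that the identifications by $A_{k}^{(1)},B_{k}^{(1)},C_{k}^{(1)},D_{k}^{(1)}$ match those by $A_{k}^{(2)},B_{k}^{(2)},C_{k}^{(2)},D_{k}^{(2)}$, and fill in the interior by any quasiconformal extension.

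Next I would propagate $\phi_{0}$ equivariantly by the rule $\phi(\gamma z)=\iota(\gamma)\phi_{0}(z)$ for $z\in D_{1}$ and $\gamma\in\Gamma_{1}$, where $\iota:\Gamma_{1}\to \Gamma_{2}$ is the isomorphism sending the chosen geometric generators of $\Gamma_{1}$ to those of $\Gamma_{2}$. Compatibility of $\phi_{0}$ with the side pairings ensures this is well defined and continuous across tile boundaries, and it produces a quasiconformal homeomorphism $\phi:\Omega_{\Gamma_{1}}\to \Omega_{\Gamma_{2}}$ with the same dilatation as $\phi_{0}$ that conjugates $\Gamma_{1}$ to $\Gamma_{2}$. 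Finally, by the standard fact that an equivariant quasiconformal homeomorphism between regions of discontinuity of non-elementary finitely generated Kleinian groups extends continuously through the totally disconnected limit set to a quasiconformal homeomorphism of $\widehat{\mathbb C}$ (cf.\ \cite{Maskit:book}), one obtains the required global $f$.

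The main obstacle is the bookkeeping in the Klein--Maskit step: arranging the two sets of fundamental disks so that $\phi_{0}$ respects each of the three families of side pairings simultaneously, and in particular checking that the equivariant extension is continuous at the rank-two parabolic fixed points where four tangent disks accumulate. Once $\phi_{0}$ is correctly defined on a single fundamental set the rest is a routine application of quasiconformal reflection/extension and the equivariance recipe above.
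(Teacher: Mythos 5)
The paper offers no proof of Theorem \ref{teo1} at all --- it is asserted as ``a consequence of the geometric description of ST groups by the Klein--Maskit combination theorem'' --- and your outline is precisely the standard argument that assertion is invoking. Your ``only if'' direction is sound: fixed-point counts plus discreteness and torsion-freeness force type preservation, conjugation induces a bijection on conjugacy classes of maximal rank-two parabolic subgroups (so $\gamma_{1}=\gamma_{2}$), and the induced quasiconformal homeomorphism of the analytically finite quotients gives $\alpha_{1}+\gamma_{1}=\alpha_{2}+\gamma_{2}$ and $\beta_{1}=\beta_{2}$. The fundamental-domain construction of $\phi_{0}$ and its equivariant propagation to $\Omega_{\Gamma_{1}}$ are likewise the standard and correct route.

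The one step you should not wave through is the last one. A homeomorphism of $\widehat{\mathbb C}$ that is $K$-quasiconformal off a compact totally disconnected set need not be quasiconformal: total disconnectedness, and even vanishing area, are not removability criteria, and limit sets of ST groups can have Hausdorff dimension exceeding $1$, so the $\sigma$-finite-length removability theorem is not available either. The clean repair stays inside the paper's toolbox. Let $\mu$ be the Beltrami coefficient of your equivariant map $\phi$ on $\Omega_{\Gamma_{1}}$ (it is $\Gamma_{1}$-invariant by equivariance), extend it by $0$ on $\Lambda_{\Gamma_{1}}$, and let $w^{\mu}$ be a normalized solution of the Beltrami equation. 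Then $\Gamma_{1}^{\mu}=w^{\mu}\Gamma_{1}(w^{\mu})^{-1}$ is again an ST group, and $\phi\circ (w^{\mu})^{-1}$ is a univalent holomorphic map on $\Omega_{\Gamma_{1}^{\mu}}=w^{\mu}(\Omega_{\Gamma_{1}})$, hence by Proposition \ref{Prop2} the restriction of a M\"obius transformation $M$. The map $f=M\circ w^{\mu}$ is then a global quasiconformal homeomorphism agreeing with $\phi$ on $\Omega_{\Gamma_{1}}$, and since $f\gamma f^{-1}$ and $\iota(\gamma)$ are M\"obius transformations agreeing on the open set $\Omega_{\Gamma_{2}}$, it conjugates $\Gamma_{1}$ onto $\Gamma_{2}$. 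With that substitution for your extension step, the proof is complete.
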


As a consequence of the Klein-Maskit combination theorem, the above ST group $\Gamma$ is torsion-free (i.e., it only contains parabolic and loxodromic transformations besides the identity). Moreover, every parabolic transformation is conjugated in $\Gamma$ to a power of either one of the parabolic elements $B_{j}, C_{t}, D_{t}$. 
Also, the limit set $\Lambda_{\Gamma}$ is a totally disconnected set (a Cantor set), so an ST group is a function group. 

Conversely, Maskit's classification of function groups asserts that a torsion-free function group, whose limit set is totally disconnected, is obtained as a free product of cyclic groups generated by parabolic and/or loxodromic transformations and groups generated by two commuting parabolic transformations. This asserts the following characterization of ST groups.

\begin{prop}\label{Prop1}
A torsion-free function group is an ST group if and only if its limit set is totally disconnected.
\end{prop}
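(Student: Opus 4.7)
The plan is to split the proposition into its two implications and treat each one separately, relying on the structural facts listed in Section \ref{Sec:STgroups} for the forward direction and on Maskit's classification of function groups for the reverse direction.

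For the forward direction, if $\Gamma$ is an ST group of signature $(\alpha,\beta,\gamma)$, properties (1)-(3) stated just before Remark \ref{Obs1} give that $\Lambda_{\Gamma}$ is either empty, one point, two points, or a Cantor set, and in every case totally disconnected. Moreover, $\Gamma$ is torsion-free (a standard consequence of the Klein--Maskit combination used to build it) and $\Omega_{\Gamma}$ is connected, so $\Gamma$ is a function group. This direction is therefore essentially recorded in the preliminaries.

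For the reverse direction, let $G$ be a torsion-free function group with totally disconnected limit set. I would apply Maskit's classification \cite{Maskit:function2,Maskit:function3,Maskit:function4}, which exhibits any finitely generated function group as obtained from a finite list of basic factors by iterated Klein--Maskit combinations. The basic factors are elementary (cyclic elliptic, parabolic or loxodromic; rank-two parabolic) or non-elementary (quasi-Fuchsian, totally or partially degenerate, Koebe B-groups). The key step is to rule out every factor and every combination type except those appearing in the ST definition. First, torsion-freeness eliminates cyclic elliptic factors. Second, each basic factor's limit set embeds in $\Lambda_{G}$ up to conjugation; any non-elementary basic factor has limit set containing a Jordan arc or circle, and therefore cannot occur under the total disconnectedness hypothesis. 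Third, each combination must be along the trivial subgroup (i.e.\ a genuine free product), because amalgamation along a cyclic loxodromic would pinch an axis and amalgamation along a cyclic parabolic (an accidental-parabolic handle) or an HNN extension of B-group type would both produce non-trivial continua in $\Lambda_{G}$. What survives is exactly a Klein--Maskit free product of finitely many cyclic loxodromic, cyclic parabolic, and rank-two parabolic factors, which is the definition of an ST group.

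The main obstacle is not really conceptual but bookkeeping: one must carefully translate Maskit's rather elaborate classification into the statement that the only totally-disconnected, torsion-free option is the ST free product, ruling out every non-trivial amalgamation and every non-elementary basic factor by showing each would introduce a continuum into the limit set. Once the paragraph preceding the proposition is read as a formulation of Maskit's theorems, the proof reduces to this direct invocation.
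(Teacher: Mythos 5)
Your proposal matches the paper's argument: the forward direction is read off from the properties of ST groups established via the Klein--Maskit combination theorem (torsion-freeness, connected region of discontinuity, totally disconnected limit set), and the converse is a direct appeal to Maskit's classification of function groups, which the paper simply quotes as asserting that a torsion-free function group with totally disconnected limit set is a free product of cyclic loxodromic, cyclic parabolic, and rank-two parabolic factors. The extra bookkeeping you describe for ruling out non-elementary factors and non-trivial amalgamations is a reasonable elaboration of what the paper leaves implicit, but the route is the same.
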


Another characterization was provided by Chuckrow in \cite[Proposition 2]{Chuckrow}).

\begin{prop}[\cite{Chuckrow}]\label{Prop2}
A Kleinian group is an ST group if and only if its finitely generated, torsion-free and the only univalent analytic functions on its region of discontinuity are restrictions of M\"obius transformations. 
\end{prop}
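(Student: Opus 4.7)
The plan is to prove both implications separately, with the substantive work in each direction arising from quite different considerations.

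For the ``only if'' direction, assume $\Gamma$ is an ST group of signature $(\alpha,\beta,\gamma)$. Finite generation is immediate from the Klein--Maskit construction, which produces $\Gamma$ with the explicit $\alpha+\beta+2\gamma$ geometric generators. Torsion-freeness follows because each free factor (cyclic loxodromic, cyclic parabolic, or rank-two parabolic) is torsion-free and the combination theorem preserves this property; this is already noted in the excerpt preceding Proposition \ref{Prop1}. The nontrivial content is the univalent-to-M\"obius statement. Given a univalent analytic $f:\Omega_{\Gamma}\to\widehat{\mathbb C}$, I would set $\Gamma^{*}=f\Gamma f^{-1}$, viewed a priori as a group of conformal automorphisms of the domain $f(\Omega_{\Gamma})$. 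The key step is to show that $\Gamma^{*}$ is actually contained in ${\rm PSL}_{2}({\mathbb C})$; once this is established, $f$ conjugates the non-elementary group $\Gamma$ to the Kleinian group $\Gamma^{*}$, and agreement of $f$ with a suitable M\"obius transformation at the fixed points of two non-commuting loxodromics (plus orientation) forces $f$ itself to be the restriction of a global M\"obius transformation. To show each $f\gamma f^{-1}$ is M\"obius, the idea is to exploit the $\Gamma$-invariance of the Cantor limit set: the boundary $\partial f(\Omega_{\Gamma})$ inherits a dynamical structure from $\Gamma^{*}$, and a removability-type argument for $\Lambda_{\Gamma}$ combined with the properly discontinuous action of $\Gamma^{*}$ should force each such conjugate to extend conformally across the boundary to a global automorphism of $\widehat{\mathbb C}$.

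For the converse, suppose $\Gamma$ is finitely generated, torsion-free, and every univalent analytic function on $\Omega_{\Gamma}$ is the restriction of a M\"obius transformation. In view of Proposition \ref{Prop1}, it suffices to show that $\Gamma$ is a function group whose limit set is totally disconnected. The rigidity hypothesis first forces $\Omega_{\Gamma}$ to be connected: otherwise one could take a single connected component of $\Omega_{\Gamma}$, apply a Riemann map sending it into a small disk, and translate this disk to a position disjoint from the remaining components, thereby producing a univalent function on $\Omega_{\Gamma}$ that is manifestly not a M\"obius restriction. Hence $\Omega_{\Gamma}$ itself is a $\Gamma$-invariant component, so $\Gamma$ is a function group. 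For total disconnectedness of $\Lambda_{\Gamma}$, I would invoke Maskit's decomposition of finitely generated function groups and argue by contradiction: whenever $\Lambda_{\Gamma}$ contains a nontrivial continuum, one of the building blocks (quasi-Fuchsian, $b$-group, or totally degenerate) supplies a Jordan subdomain of $\Omega_{\Gamma}$ which can be conformally deformed inside $\widehat{\mathbb C}$ while leaving the rest of $\Omega_{\Gamma}$ untouched, producing a non-M\"obius univalent function and contradicting the hypothesis.

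The main obstacle is the step in the forward direction establishing that $f\Gamma f^{-1}\subset{\rm PSL}_{2}({\mathbb C})$. Conformal automorphisms of general planar domains are far from being M\"obius, so the argument cannot be purely formal; it must use in an essential way that $\Lambda_{\Gamma}$ is a Kleinian Cantor set. Most likely this requires controlling the boundary behavior of $f$ via a removability property of $\Lambda_{\Gamma}$ for univalent functions, or equivalently a dynamical argument exploiting the density of loxodromic fixed points in $\Lambda_{\Gamma}$ and the contractive behavior of $\Gamma$ near them, along the lines of \cite{Chuckrow}.
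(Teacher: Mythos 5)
Note first that the paper offers no proof of this statement: it is quoted verbatim from Chuckrow (Proposition 2 of \cite{Chuckrow}), so the only fair comparison is with the argument in that reference. Measured against what such a proof actually requires, your sketch has a genuine gap exactly where you flag it, and that gap is the entire content of the forward implication. The statement that every univalent analytic $f:\Omega_{\Gamma}\to\widehat{\mathbb C}$ is a M\"obius restriction is equivalent to the assertion that $\Lambda_{\Gamma}$ is removable for univalent functions (equivalently, has absolute area zero), and the classical proof for Schottky-type groups establishes precisely this: one uses the nested families of Jordan domains produced by the Klein--Maskit construction and a Gr\"otzsch/area estimate to show the limit set is a nullset for univalent maps, whence $f$ extends to a univalent map of $\widehat{\mathbb C}$ and is therefore M\"obius. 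Your detour through $\Gamma^{*}=f\Gamma f^{-1}$ does not avoid this input --- you still need the removability of $\Lambda_{\Gamma}$ to show each $f\gamma f^{-1}$ is M\"obius --- and it adds a further unaddressed step: even granting that $f$ conjugates $\Gamma$ to a Kleinian group, concluding that $f$ itself is M\"obius requires extending $f$ to the limit set (or a density-of-fixed-points argument), which is not automatic for a map defined only on $\Omega_{\Gamma}$. As written, the forward direction asserts its conclusion rather than proving it.

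The converse direction is essentially sound in outline, with two small repairs. For connectedness, replace ``apply a Riemann map sending it into a small disk'' by a M\"obius inversion centered at a point of another component followed by a contraction: a multiply connected component need not admit a Riemann map, but since its complement contains the other (open) components, a M\"obius map carries it into an arbitrarily small disk inside itself, and the map equal to this on one component and to the identity elsewhere is univalent, analytic, and not globally M\"obius. For total disconnectedness you do not need Maskit's decomposition at all: if $\Lambda_{\Gamma}$ contained a nondegenerate continuum $E$, the component $U$ of $\widehat{\mathbb C}\setminus E$ meeting $\Omega_{\Gamma}$ is simply connected and hyperbolic, so it carries a non-M\"obius univalent self-map into a small subdisk; patching this with the identity on $\Omega_{\Gamma}\setminus U$ again violates the rigidity hypothesis. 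One then concludes via Proposition \ref{Prop1}, as you indicate.
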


The above permits us to note the following fact on subgroups of ST groups.

\begin{prop}\label{Prop3}
Every finitely generated subgroup of an ST group is an ST group. In particular, every finite index subgroup of an ST group is an ST group.
\end{prop}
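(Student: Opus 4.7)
The plan is to verify the characterization in Proposition \ref{Prop1}. Let $\Gamma$ be an ST group and $H\le\Gamma$ a finitely generated subgroup; I want to show that $H$ is a torsion-free function group whose limit set is totally disconnected. Three of the required properties are immediate: $H$ is discrete (as a subgroup of the discrete group $\Gamma$), torsion-free (since $\Gamma$ is), and finitely generated by hypothesis.

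Next, since every $H$-orbit is contained in the corresponding $\Gamma$-orbit, one has $\Lambda_H\subseteq\Lambda_\Gamma$, and by the listed properties of ST groups the set $\Lambda_\Gamma$ is at most a Cantor set, hence totally disconnected. Therefore $\Lambda_H$ is a compact totally disconnected subset of $\widehat{\mathbb C}$. A classical fact from planar topology says that the complement in $\widehat{\mathbb C}$ of such a set is connected, so $\Omega_H = \widehat{\mathbb C}\setminus\Lambda_H$ is a connected (and non-empty, since it contains $\Omega_\Gamma$) open set. Thus $\Omega_H$ is itself an $H$-invariant connected component of $\Omega_H$, so $H$ is a function group. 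Proposition \ref{Prop1} then applies and yields that $H$ is an ST group.

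For the ``in particular'' clause, a finite-index subgroup of a finitely generated group is again finitely generated (Schreier's index formula), so the first part covers this case; as a bonus, in this situation one already knows $\Omega_H=\Omega_\Gamma$, which makes the verification essentially trivial.

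The only non-formal step is the topological assertion that $\Omega_H$ is connected, and this is a standard and well-known fact about compact totally disconnected planar sets. If one preferred to avoid it, the same conclusion can be reached via Proposition \ref{Prop2}: given a univalent analytic $f:\Omega_H\to\widehat{\mathbb C}$, its restriction to $\Omega_\Gamma\subseteq\Omega_H$ is univalent analytic and hence, by Proposition \ref{Prop2} applied to $\Gamma$, agrees with some M\"obius transformation $m$ on $\Omega_\Gamma$; analytic continuation on the connected set $\Omega_H$ then forces $f=m$ throughout $\Omega_H$, so Proposition \ref{Prop2} applies to $H$ as well.
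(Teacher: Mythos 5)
Your proof is correct and follows the same route the paper intends: the paper states Proposition \ref{Prop3} as an immediate consequence of the characterization in Proposition \ref{Prop1}, and you supply exactly the needed verifications (discreteness, torsion-freeness, $\Lambda_H\subseteq\Lambda_\Gamma$ totally disconnected, and connectedness of the complement of a compact totally disconnected planar set, so that $H$ is a function group). Only a cosmetic quibble: your closing alternative via Proposition \ref{Prop2} does not actually avoid the topological fact, since the identity-theorem step still invokes the connectedness of $\Omega_H$ (or at least that $\Omega_\Gamma$ meets every component of $\Omega_H$).
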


%%%%%%%%%%%%%%%%%
%%%%%%%%%%%%%%%%%
\section{Computing some subgroups of ST groups}\label{Sec:Construcciones}
We denote by  $(\alpha_{1},\beta_{1},\gamma_{1}) \leq_{n} (\alpha_{2},\beta_{2},\gamma_{2})$ to indicate that a ST group of signature $(\alpha_{2},\beta_{2},\gamma_{2})$ contains a ST group of signature $(\alpha_{1},\beta_{1},\gamma_{1})$ as an index $n$ subgroup.
We denote by ${\mathfrak S}_{n}$ the symmetric group in $n$ letters.

By Proposition \ref{Prop3}, any finite index subgroup of a ST group $\Gamma$ of signature $(\hat{\alpha},\hat{\beta},\hat{\gamma})$, where $\hat{\alpha}+\hat{\beta}+\hat{\gamma} \geq 2$, is also a ST group of some signature. Every such subgroup, say of index $n \geq 2$, is obtained (up to conjugation in $\Gamma$) as $\Theta^{-1}({\rm Stab}_{\Theta(\Gamma)}(n))$, where 
$\Theta:\Gamma \to {\mathfrak S}_{n}$ is a homomorphism with $\Theta(\Gamma)$ a transitive subgroup. Below, we construct some of these homomorphisms $\Theta$, which will be used in the proof of Theorem \ref{Main} and, for each of them we describe the signature of the corresponding ST subgroup.

In our constructions, we make use of the following observation (see Figure \ref{Figura1}).

\begin{lemm}\label{lemita}
\begin{enumerate}
\item Let $G=\langle T,B \rangle$ be a ST group of signature either $(1,1,0)$ ($T$ is loxodromic and $B$ parabolic) or $(0,2,0)$ (both $T$ and $B$ parabolics). Then the subgroup $K=\langle TB, TB^{-1} \rangle$ is a ST group of signature $(1,1,0)$.

\item Let $G=\langle T,C,D \rangle$ be a ST group of signature either $(1,0,1)$ ($T$ is loxodromic and $C$, $D$ commuting parabolics) or $(0,1,1)$ ($T$, $C$ and $D$ parabolics). Then the subgroup $K=\langle TCT^{-1}, TD, TD^{-1} \rangle$ is a ST group of signature $(1,0,1)$.
\end{enumerate}
\end{lemm}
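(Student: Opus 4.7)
The plan is to apply the Klein--Maskit combination theorem in each part, by first rewriting the generating set of $K$ to expose the claimed free-product decomposition and then verifying the geometric hypotheses.

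For part (1), the direct computation
\[
(TB)(TB^{-1})^{-1} = TB^2 T^{-1}
\]
motivates setting $L := TB$ and $P := TB^2 T^{-1}$. Since $TB^{-1} = P^{-1}L$, one has $K = \langle L, P \rangle$. The element $P$ is a conjugate in $G$ of the parabolic $B^2$, hence parabolic with fixed point $T(p_B)$. The element $L = TB$ is a cyclically reduced word of syllable length two in the free product $G = \langle T \rangle * \langle B \rangle$, valid in both signatures; thus $L$ is not conjugate in $G$ into either free factor. Since every parabolic element of an ST group is conjugate to a power of a parabolic geometric generator and $G$ is torsion-free, $L$ must be loxodromic.

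To conclude that $K = \langle L \rangle * \langle P \rangle$ is geometrically realized as an ST group of signature $(1,1,0)$, I would apply Klein--Maskit to $\langle L \rangle$ and $\langle P \rangle$ inside $G$. Starting from a Schottky-type fundamental polygon for $G$ bounded by Jordan curves $\Sigma_T^{\pm}, \Sigma_B^{\pm}$ (with $\Sigma_B^{\pm}$ tangent at $p_B$), one picks small Jordan curves around the two fixed points of the loxodromic $L$ (which are distinct from $T(p_B)$ because of the disjointness of the $G$-polygon) together with the $T$-images of tangent Jordan curves paired by $B^2$, the latter pair being tangent at $T(p_B)$ and paired by $P$. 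Pairwise disjointness of all four curves then follows from the disjointness built into the $G$-polygon, and Klein--Maskit yields the free-product structure with signature $(1,1,0)$.

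For part (2), the argument is structurally identical. With $L := TD$, $C' := TCT^{-1}$, and $P := TD^2T^{-1} = (TD)(TD^{-1})^{-1}$, we have $K = \langle L, C', P \rangle$ because $TD^{-1} = P^{-1}L$. As before, $L$ is loxodromic (cyclically reduced of length two in $G = \langle T \rangle * \langle C, D \rangle$). The parabolic elements $C'$ and $P$ commute, since $CD = DC$ implies $CD^2 = D^2 C$, and thus $\Lambda := \langle C', P \rangle$ is a rank-two parabolic group fixing $T(p_{CD})$, conjugate under $T$ to $\langle C, D^2 \rangle$. Hence $K = \langle L \rangle \cdot \Lambda$; applying Klein--Maskit now to combine a loxodromic cyclic factor with a rank-two parabolic factor gives $K$ the signature $(1,0,1)$.

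The main obstacle in each part is the explicit geometric verification of Klein--Maskit: specifying Jordan curves that are pairwise disjoint with the appropriate tangencies at parabolic fixed points requires some care, but ultimately reduces to the disjointness built into the given Schottky-type fundamental polygon for $G$, which is presumably made explicit in Figure \ref{Figura1}.
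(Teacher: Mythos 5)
Your proposal is correct and follows essentially the same route as the paper's proof: you rewrite $K$ as $\langle TB,\, TB^{2}T^{-1}\rangle$ (resp.\ $\langle TD\rangle * \langle TCT^{-1},\, TD^{2}T^{-1}\rangle$), identify the loxodromic and (commuting) parabolic generators, and appeal to the Klein--Maskit configuration, which is exactly what the paper does by pointing to Figure~\ref{Figura1}. Your extra justification that $TB$ (resp.\ $TD$) is loxodromic --- being cyclically reduced of syllable length two in the free product, hence not conjugate into a factor and so not parabolic --- fills in a detail the paper leaves implicit.
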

\begin{proof}
(1) Let us observe that $K=\langle U=TB, V=TB^{2}T^{-1}\rangle$, where $U$ is loxodromic and $V$ is parabolic (left side in Figure \ref{Figura1}).
(2) In this case, $K=\langle U=TD, V=TCT^{-1}, W=TD^{2}T^{-1}\rangle=\langle U \rangle * \langle V,W\rangle$, where $U$ is loxodromic and $V$ and $W$ are commuting parabolics (right side in Figure \ref{Figura1}).
\end{proof}

Let us fix a set of geometrical generators for $\Gamma$.
Let (i) $A_{1},\ldots,A_{\hat{\alpha}}$ loxodromic elements, (ii) $B_{1},\ldots,B_{\hat{\beta}}$ parabolic elements and (iii) $C_{1}, D_{1},\ldots,C_{\hat{\gamma}},D_{\hat{\gamma}}$ be pairs of parabolic elements such that $C_{j}$ and $D_{j}$ commute, such that all of them provide a set of geometrical generators (in the Klein-Maskit combination construction).

We set $\sigma=(1,2,\ldots,n)$ and $\tau_{r}=(1,2)(3,4)\cdots(2r-1,2r)$, where $1 \leq r \leq n/2$.

%%%%%%%%%%%%%%%%
\subsection{\bf Construction A: }\label{ConstA} 
\mbox{}\newline
Let  $\hat{\alpha} \geq 1$, $0 \leq r_{1},\ldots,r_{\hat{\beta}} \leq n/2$ (if $\hat{\beta} \geq 1$) and $0 \leq s_{1},\ldots,s_{\hat{\gamma}} \leq n/2$ (if $\hat{\gamma} \geq 1$).

Let $\Theta$ be defined by the following rules:
$$\Theta(A_{1})=\sigma, \; \Theta(A_{2})=\cdots=\Theta(A_{\hat{\alpha}})=1,$$
$$\Theta(B_{j})=\tau_{r_{j}},\; j=1,\ldots,\hat{\beta},$$ 
$$\Theta(C_{t})=1,\; \Theta(D_{t})=\tau_{s_{t}}, \; t=1,\ldots,\hat{\gamma}.$$

In this case,  $\Theta^{-1}({\rm Stab}_{\Theta(\Gamma)}(n))$ is generated by
$$A_{1}^{n},$$
$$A_{1}^{l}A_{i}A_{1}^{-l}, \; l=0,\ldots,n-1,\; i=2,\ldots,\hat{\alpha},$$
$$A_{1}^{l}B_{j}A_{1}^{-l}, \; l=0,\ldots,n-2r_{j}-1,\; j=1,\ldots,\hat{\beta},$$
$$A_{1}^{l+1}B_{j}A_{1}^{-l},  A_{1}^{l+1}B_{j}^{-1}A_{1}^{-l}, \; l=n-2r_{j},n-2r_{j}+2,\ldots,n-2,\; j=1,\ldots,\hat{\beta},$$
$$A_{1}^{l}C_{t}A_{1}^{-l},\; l=0,\ldots,n-1,\; t=1,\ldots,\hat{\gamma},$$
$$A_{1}^{l}D_{t}A_{1}^{-l},\; l=0,\ldots,n-2s_{t}-1,\; t=1,\ldots,\hat{\gamma},$$
$$A_{1}^{l+1}D_{t}A_{1}^{-l},  A_{1}^{l+1}D_{t}^{-1}A_{1}^{-l},\; l=n-2s_{t},n-2s_{t}+2,\ldots,n-2,\; t=1,\ldots,\hat{\gamma}.$$

The above is an ST group of signature 
$$\left(1+n(\hat{\alpha}-1)+\sum_{j=1}^{\hat{\beta}}r_{j}+\sum_{t=1}^{\hat{\gamma}}s_{t},n\hat{\beta}-\sum_{j=1}^{\hat{\beta}}r_{j},n\hat{\gamma}-\sum_{t=1}^{\hat{\gamma}}s_{t}\right)$$
 which is an index $n$ subgroup of an ST group of signature $(\hat{\alpha},\hat{\beta},\hat{\gamma})$.

\begin{example}\label{Ej1}
Some particular examples of the above construction are the following ones.
\begin{enumerate}
\item If we set $r_{j}=s_{t}=0$, then we obtain $$(1+n(\hat{\alpha}-1),n \hat{\beta},n \hat{\gamma}) \leq_{n} (\hat{\alpha},\hat{\beta},\hat{\gamma}).$$

\item If $\hat{\alpha}=\hat{\beta}=1$, $\hat{\gamma}=0$ and $r_{1}=r$, then 
$$(1+r,n-r,0) \leq _{n} (1,1,0).$$

\item If $\hat{\alpha}=\hat{\gamma}=1$, $\hat{\beta}=0$, $s_{1}=r$, then 
$$(1+r,0,n-r) \leq _{n} (1,0,1).$$

\item If $\hat{\alpha}=\hat{\beta}=\hat{\gamma}=1$,  $s_{1}=r_{1}=r$, then 
$$(1+2r,n-r,n-r) \leq_{n} (1,1,1).$$

\item If $n=2$, $\sum_{j=1}^{\hat{\beta}}r_{j}=\hat{\beta}-r$, $\sum_{t+1}^{\hat{\gamma}}s_{t}=\hat{\gamma}-s$, then 
$$(2\hat{\alpha}+\hat{\beta}+\hat{\gamma}-r-s-1,\hat{\beta}+r,\hat{\gamma}+s) \leq_{2} (\hat{\alpha},\hat{\beta},\hat{\gamma}).$$

\item If $\hat{\beta}=\hat{\gamma}=\gamma$, $\hat{\alpha}=\alpha$, $\sum_{j=1}^{\gamma}r_{j}=\sum_{t=1}^{\gamma}s_{t}=r \in \{0,\ldots,\gamma n/2\}$, then 
$$(1+n(\alpha-1)+2r,n\gamma-r,n\gamma-r) \leq_{n} (\alpha,\gamma,\gamma).$$

\end{enumerate}
\end{example}

%%%%%%%%%%%%%%%%
\subsection{\bf Construction B: }\label{ConstB} 
\mbox{}\newline
Let  $\hat{\beta} \geq 1$,  $0 \leq r_{2},\ldots,r_{\hat{\beta}} \leq n/2$ (if $\hat{\beta} \geq 2$) and $0 \leq s_{1},\ldots,s_{\hat{\gamma}} \leq n/2$ (if $\gamma \geq 1$).

Let $\Theta$ be defined by the following rules:
$$\Theta(A_{i})=1, \; i=1,\ldots,\hat{\alpha},$$
$$\Theta(B_{1})=\sigma, \; \Theta(B_{j})=\tau_{r_{j}},\; j=2,\ldots,\hat{\beta},$$
$$\Theta(C_{t})=1,\; \Theta(D_{t})=\tau_{s_{t}}, \; t=1,\ldots,\hat{\gamma}.$$

In this case,  $\Theta^{-1}({\rm Stab}_{\Theta(\Gamma)}(n))$ is generated by
$$B_{1}^{l}A_{i}B_{1}^{-l}, \; l=0,\ldots,n-1,\; i=1,\ldots,\hat{\alpha},$$
$$B_{1}^{l}B_{j}B_{1}^{-l}, \; l=0,\ldots,n-2r_{j}-1,\; j=2,\ldots,\hat{\beta},$$
$$B_{1}^{l+1}B_{j}B_{1}^{-l},  B_{1}^{l+1}B_{j}^{-1}B_{1}^{-l}, \; l=n-2r_{j},n-2r_{j}+2,\ldots,n-2,\; j=2,\ldots,\hat{\beta},$$
$$B_{1}^{l}C_{t}B_{1}^{-l},\; l=0,\ldots,n-1,\; t=1,\ldots,\hat{\gamma},$$
$$B_{1}^{l}D_{t}B_{1}^{-l},\; l=0,\ldots,n-2s_{t}-1,\; t=1,\ldots,\hat{\gamma},$$
$$B_{1}^{l+1}D_{t}B_{1}^{-l},  B_{1}^{l+1}D_{t}^{-1}B_{1}^{-l},\; l=n-2s_{t},n-2s_{t}+2,\ldots,n-2,\; t=1,\ldots,\hat{\gamma}.$$

The above is an ST group of signature 
$$\left(n\hat{\alpha}+\sum_{j=2}^{\hat{\beta}}r_{j}+\sum_{t=1}^{\hat{\gamma}}s_{t},1+(\hat{\beta}-1)n-\sum_{j=2}^{\hat{\beta}}r_{j},n\hat{\gamma}-\sum_{t=1}^{\hat{\gamma}}s_{t}\right)$$
 which is an index $n$ subgroup of an ST group of signature $(\hat{\alpha},\hat{\beta},\hat{\gamma})$.

\begin{example}\label{Ej2} 
Some particular examples of the above construction are the following ones.
\begin{enumerate}
\item If we set $r_{j}=s_{t}=0$, then we obtain $$(n\hat{\alpha},1+n(\hat{\beta}-1),n\hat{\gamma}) \leq_{n} (\hat{\alpha},\hat{\beta},\hat{\gamma}).$$

\item If $n=2$, $\hat{\alpha}=\hat{\gamma}=0$, $\hat{\beta}=2$, $r_{1}=1$ and $r_{2}=0$, then $$(1,2,0) \leq_{2} (0,2,0).$$

\item If $\hat{\alpha}=0$, $\hat{\beta}=\hat{\gamma}=1$, $r_{1}=0$ and $s_{1}=r$, then 
$$(r,1,n-r) \leq _{n} (0,1,1).$$

\item If $\hat{\alpha}=0,\hat{\beta}=2,\hat{\gamma}=1$, $r_{2}=r$, $s_{1}=s$,  then $$(r+s,n+1-r,n-s) \leq_{n} (0,2,1).$$

\item If $n=2$, $\hat{\alpha}=0$, $\hat{\beta} \geq 2$, $\sum_{j=1}^{\hat{\beta}}r_{j}=\hat{\beta}-1$, $\sum_{t=1}^{\hat{\gamma}}s_{t}=\hat{\gamma}$, then 
$$(\hat{\beta}+\hat{\gamma}-1,\hat{\beta},\hat{\gamma}) \leq_{2} (0,\hat{\beta},\hat{\gamma}).$$

\end{enumerate}
\end{example}

\begin{rema}\label{observa3}
If in Example \ref{Ej2}(3) we use $n=2$ and $r=1$, we obtain an ST group of signature $(1,1,1)$ as an index two subgroup of an ST group of signature $(0,1,1)$. 
\end{rema}

%%%%%%%%%%%%%%%%
\subsection{\bf Construction C: }\label{ConstC}
\mbox{}\newline
Let  $\hat{\gamma} \geq 1$, $0 \leq r_{1},\ldots,r_{\hat{\beta}} \leq n/2$ (if $\hat{\beta} \geq 1$) and $0 \leq s_{2},\ldots,s_{\hat{\gamma}} \leq n/2$ (if $\hat{\gamma} \geq 2$).

Let $\Theta$ be defined by the following rules:
$$\Theta(A_{i})=1, \; i=1,\ldots,\hat{\alpha},$$
$$\Theta(B_{j})=\tau_{r_{j}}, \; j=1,\ldots,\hat{\beta},$$
$$\Theta(C_{t})=1,\; t=1,\ldots,\hat{\gamma},$$
$$\Theta(D_{1})=\sigma, \; \Theta(D_{t})=\tau_{s_{t}}, \; t=2,\ldots,\hat{\gamma}.$$

In this case,  $\Theta^{-1}({\rm Stab}_{\Theta(\Gamma)}(n))$ is generated by
$$D_{1}^{n},$$
$$D_{1}^{l}A_{i}D_{1}^{-l}, \; l=0,\ldots,n-1,\; i=1,\ldots,\hat{\alpha},$$
$$D_{1}^{l}B_{j}D_{1}^{-l}, \; l=0,\ldots,n-2r_{j}-1,\; j=1,\ldots,\hat{\beta},$$
$$D_{1}^{l+1}B_{j}D_{1}^{-l},  D_{1}^{l+1}B_{j}^{-1}D_{1}^{-l}, \; l=n-2r_{j},n-2r_{j}+2,\ldots,n-2,\; j=1,\ldots,\hat{\beta},$$
$$D_{1}^{l}C_{t}D_{1}^{-l},\; l=0,\ldots,n-1,\; t=1,\ldots,\hat{\gamma},$$
$$D_{1}^{l}D_{t}D_{1}^{-l},\; l=0,\ldots,n-2s_{t}-1,\; t=2,\ldots,\hat{\gamma},$$
$$D_{1}^{l+1}D_{t}D_{1}^{-l},  D_{1}^{l+1}D_{t}^{-1}D_{1}^{-l},\; l=n-2s_{t},n-2s_{t}+2,\ldots,n-2,\; t=2,\ldots,\hat{\gamma}.$$

The above is an ST group of signature 
$$\left(n\hat{\alpha}+\sum_{j=1}^{\hat{\beta}}r_{j}+\sum_{t=2}^{\hat{\gamma}}s_{t},n\hat{\beta}-\sum_{j=1}^{\hat{\beta}}r_{j},1+n(\hat{\gamma}-1)-\sum_{t=2}^{\hat{\gamma}}s_{t}\right)$$
 which is an index $n$ subgroup of an ST group of signature $(\hat{\alpha},\hat{\beta},\hat{\gamma})$.

\begin{example}\label{Ej3}
Some particular examples of the above construction are the following ones.
\begin{enumerate}
\item If we set $r_{j}=s_{t}=0$, then we obtain $$(n\hat{\alpha},n\hat{\beta},1+n(\hat{\gamma}-1)) \leq_{n} (\hat{\alpha},\hat{\beta},\hat{\gamma}).$$

\item If $n=2$, $\hat{\alpha}=\hat{\beta}=0$, $\hat{\gamma}=2$, $s_{1}=1$ and $s_{2}=0$, then $$(1,0,2) \leq_{2} (0,0,2).$$

\item If $n=2$, $\hat{\alpha}=0$, $\hat{\gamma} \geq 2$, $\sum_{j=1}^{\hat{\beta}}r_{j}=\hat{\beta}$, $\sum_{t=2}^{\hat{\gamma}}s_{t}=\hat{\gamma}-1$, then 
$$(\hat{\beta}+\hat{\gamma}-1,\hat{\beta},\hat{\gamma}) \leq_{2} (0,\hat{\beta},\hat{\gamma}).$$

\end{enumerate}
\end{example}

%%%%%%%%%%%%%%%%%%%
%%%%%%%%%%%%%%%%%%%
\section{Proof of Theorem \ref{Main}}
By the constructions in the previous section, we observe that there are non-elementary ST groups of different signatures whose regions of discontinuity are quasiconformally equivalent. But others are not quasiconformally equivalent. For example, in \cite{Shiga}, H. Shiga observed that the signatures $(\alpha,0,0)$ and $(\alpha-1,1,0)$, where $\alpha \geq 2$, cannot be quasiconformally equivalent. 

\begin{rema}\label{Observa4}
Following the same argument as in \cite{Shiga}, one may observe that the signatures $(2,0,0)$ and $(0,2,0)$ cannot be quasiconformally equivalent. 
Also, the signature $(0,0,2)$ cannot be quasiconformally equivalent to the either signature $(2,0,0)$ and $(0,2,0)$. This follows from the fact that (i) the limit set of an ST group of signature $(0,0,2)$ is not contained in a quasicircle and (ii) the limit sets of ST groups of signatures $(2,0,0)$ and $(0,2,0)$ are. This, in particular, provides part (4) of Theorem \ref{Main}.
\end{rema}

%%%%%%%%%%%%%%%%
\begin{rema}\label{Const4}
Let $n,b \geq 1$ and $\beta \geq 1$ such that $2\beta+1 \leq nb$. If 
$0 \leq r_{j} \leq n/2$, for $j=1,\ldots,b$, and set $r=r_{1}+\cdots+r_{b} \in \{0,1,\ldots,bn/2\}$, then construction \ref{ConstA} asserts that
$(1,b,0) \leq_{n} (1+r,nb-r,0)$. Note that, by Example \ref{Ej1}(1), this signature is equivalent to $(1,1,0)$. If $\alpha=1+r$ and $\beta=nb-r$, then $2r \leq 1+nb$ asserts that $\alpha \geq \beta$. This kind of computations permits us to observe that every signature of the form $(\alpha,\beta,0)$, where $\alpha \geq \beta \geq 1$, is equivalent to one of the form $(\hat{\alpha},\hat{\beta},0)$, where $1 \leq \hat{\alpha} \leq \hat{\beta}$, and that they are all equivalent to $(1,1,0)$.

Similarly, every signature of the form $(\alpha,0,\gamma)$, where $\alpha \geq \gamma \geq 1$, is equivalent to one of the form $(\hat{\alpha},0,\hat{\gamma})$, where $1 \leq \hat{\alpha} \leq \hat{\gamma}$, and that they are all equivalewnt to $(1,0,1)$.
\end{rema}

Below, we observe two simple conditions to obtain quasiconformal equivalence between two triples, which will be useful in what follows.

\begin{lemm}\label{Lem1}
Let $j \in \{1,2\}$ and $\Gamma_{j}$ be a non-elementary ST group of signature $(\alpha_{j},\beta_{j},\gamma_{j})$.
\begin{enumerate}
\item If there is an ST group $\Gamma$  containing, as a finite index subgroup, two ST groups of respective signatures $(\alpha_{1},\beta_{1},\gamma_{1})$ and $(\alpha_{2},\beta_{2},\gamma_{2})$, then $(\alpha_{1},\beta_{1},\gamma_{1}) \cong_{QC} (\alpha_{2},\beta_{2},\gamma_{2})$.

\item Let us assume $\Gamma_{j}$ contains, as a finite index subgroup, an ST group $K_{j}$. If $K_{1}$ and $K_{2}$ have the same signature, then 
$(\alpha_{1},\beta_{1},\gamma_{1}) \cong_{QC} (\alpha_{2},\beta_{2},\gamma_{2})$.
\end{enumerate}
\end{lemm}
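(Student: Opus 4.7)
The plan is to combine two facts already established in the excerpt: (a) for a finite index subgroup $K \leq \Gamma$ of a Kleinian group one has $\Omega_K = \Omega_\Gamma$ (Proposition E.10 from Maskit, cited in the preliminaries), and (b) by Theorem \ref{teo1}, any two ST groups with the same signature are globally quasiconformally conjugate on $\widehat{\mathbb C}$, hence their regions of discontinuity are quasiconformally equivalent. Once these are in hand, each part reduces to a short chain of equalities and QC-equivalences between the relevant regions of discontinuity.

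For part (1), let $H_{1}, H_{2} \leq \Gamma$ be the two finite-index ST subgroups of respective signatures $(\alpha_{1},\beta_{1},\gamma_{1})$ and $(\alpha_{2},\beta_{2},\gamma_{2})$. Fact (a) applied to each $H_{j} \leq \Gamma$ gives
\[
\Omega_{H_{1}} \;=\; \Omega_{\Gamma} \;=\; \Omega_{H_{2}}.
\]
Now pick any ST group $\Gamma_{j}$ of signature $(\alpha_{j},\beta_{j},\gamma_{j})$; by Theorem \ref{teo1} there is a quasiconformal homeomorphism $f_{j}:\widehat{\mathbb C}\to\widehat{\mathbb C}$ with $f_{j}H_{j}f_{j}^{-1}=\Gamma_{j}$, whose restriction sends $\Omega_{H_{j}}$ quasiconformally onto $\Omega_{\Gamma_{j}}$. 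Composing, $f_{2}\circ f_{1}^{-1}:\Omega_{\Gamma_{1}}\to\Omega_{\Gamma_{2}}$ is a quasiconformal homeomorphism, giving $(\alpha_{1},\beta_{1},\gamma_{1}) \cong_{QC} (\alpha_{2},\beta_{2},\gamma_{2})$.

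For part (2), apply fact (a) to each inclusion $K_{j} \leq \Gamma_{j}$ to get $\Omega_{K_{j}} = \Omega_{\Gamma_{j}}$ for $j=1,2$. Since $K_{1}$ and $K_{2}$ have the same signature, Theorem \ref{teo1} yields a quasiconformal homeomorphism $f:\widehat{\mathbb C}\to\widehat{\mathbb C}$ with $fK_{1}f^{-1}=K_{2}$, which restricts to a quasiconformal homeomorphism $\Omega_{K_{1}}\to\Omega_{K_{2}}$, i.e. $\Omega_{\Gamma_{1}}\to\Omega_{\Gamma_{2}}$. Hence $(\alpha_{1},\beta_{1},\gamma_{1}) \cong_{QC} (\alpha_{2},\beta_{2},\gamma_{2})$.

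There is no real obstacle: the lemma is essentially a bookkeeping statement that packages (a) and (b) into a convenient form for the proof of Theorem \ref{Main}. The only point worth flagging is the non-elementarity hypothesis on $\Gamma_{j}$, which ensures (together with $K_{j}$ being a non-trivial finite-index subgroup) that the equality $\Omega_{K_{j}}=\Omega_{\Gamma_{j}}$ from the preliminaries applies; this is automatic under the standing assumption $\alpha+\beta+\gamma\geq 2$ since then the limit sets are Cantor sets.
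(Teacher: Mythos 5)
Your proof is correct and follows essentially the same route as the paper: both parts combine the fact that a finite-index subgroup of a Kleinian group has the same region of discontinuity with Theorem \ref{teo1} (quasiconformal conjugacy of ST groups of equal signature). Your write-up merely spells out the composition of quasiconformal maps that the paper's terser argument leaves implicit.
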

\begin{proof}
(1) If $\Gamma$ is a non-elementary ST group of signature $(\alpha,\beta,\gamma)$ and $H$ is a finite index subgroup, then both have the same region of discontinuity. This observation, together with Theorem \ref{teo1}, permits us to obtain the desired claim.
(2) As $K_{1}$ and $K_{2}$ have the same signature, there is a quasiconformal homeomorphism between $\Omega_{K_{1}}$ and $\Omega_{K_{2}}$. As $K_{j}$ has finite index in $\Gamma_{j}$, it follows that $\Omega_{K_{j}}=\Omega_{\Gamma_{j}}$.
\end{proof}

Next, we will divide the signatures into those with the property that $\beta\gamma=0$ and those with $\beta\gamma \neq 0$.

%%%%%%%%%%%%%%
\subsection{Case $\beta\gamma=0$: Proof of parts (1), (2) and (3) of Theorem \ref{Main}}
Let us recall from Remark \ref{Observa4} that the three signatures $(2,0,0)$, $(0,2,0)$ and $(0,0,2)$ are pairwise quasiconformally non-equivalent. In the following, we observe that a signature $(\alpha,\beta,\gamma)$, where $\beta\gamma=0$, is quasiconformally equivalent to (exactly) one of them. This provides 
a generalization of  \cite[Theorem 6.2]{Shiga} (which, in the theorem below is case (1)). 

\begin{theo}\label{Coro1}
Let $(\alpha,\beta,\gamma)$, where $\alpha,\beta,\gamma \geq 0$, $\beta\gamma=0$ and $\alpha+\beta+\gamma \geq 2$.
Then
\begin{enumerate}
\item $(\alpha,\beta,\gamma) \cong_{QC} (2,0,0)$ if and only if $\beta=\gamma=0$.
\item $(\alpha,\beta,\gamma) \cong_{QC} (0,2,0)$ if and only if $\beta \geq 1$ and $\gamma=0$.
\item $(\alpha,\beta,\gamma) \cong_{QC} (0,0,2)$ if and only if $\beta=0$ and $\gamma \geq 1$. 
\end{enumerate}
\end{theo}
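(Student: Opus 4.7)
The ``only if'' direction is given in Remark \ref{Observa4}. For the ``if'' direction, my plan is to repeatedly apply Lemma \ref{Lem1} together with the explicit finite-index subgroup signatures computed in Section \ref{Sec:Construcciones}. The three parts are handled by parallel arguments, so I will describe parts (1) and (2) in detail and indicate how (3) follows by the obvious symmetry exchanging $\beta$ and $\gamma$, Constructions B and C, and Examples \ref{Ej2} and \ref{Ej3}.

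For part (1), given $\alpha \geq 2$, Example \ref{Ej1}(1) with $\hat\alpha = \alpha$, $\hat\beta = \hat\gamma = 0$, $n = 2$ places an ST group of signature $(2\alpha - 1, 0, 0)$ as an index-$2$ subgroup of an $(\alpha, 0, 0)$ group; similarly, with $\hat\alpha = 2$ and $n = 2\alpha - 2$, one obtains an ST subgroup of the same signature $(2\alpha - 1, 0, 0)$ inside a $(2, 0, 0)$ group. Lemma \ref{Lem1}(2) then concludes.

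For part (2), I proceed in three steps. First, $(0,2,0) \cong_{QC} (1,1,0)$: Example \ref{Ej2}(2) gives $(1,2,0) \leq_{2} (0,2,0)$ and Example \ref{Ej1}(2) with $n = 2$, $r = 0$ gives $(1,2,0) \leq_{2} (1,1,0)$, so Lemma \ref{Lem1}(2) applies. Second, $(0,\beta,0) \cong_{QC} (0,2,0)$ for $\beta \geq 3$: Example \ref{Ej2}(1) with $\hat\alpha = \hat\gamma = 0$, $\hat\beta = 2$, $n = \beta - 1$ gives $(0,\beta,0) \leq_{\beta-1} (0,2,0)$, so Lemma \ref{Lem1}(1) applies. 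Third, for $\alpha, \beta \geq 1$, I reduce $(\alpha, \beta, 0) \cong_{QC} (1, 1, 0)$ by an iterative Nielsen-type move. Assuming $\alpha \geq 2$, Construction A applied to $(\alpha, \beta, 0)$ with $n = 2$ and all $r_j = 0$ produces the subgroup signature $(2\alpha - 1, 2\beta, 0)$, while Construction A applied to $(\alpha - 1, \beta + 1, 0)$ with $n = 2$ and exactly two of the $\beta + 1$ parameters $r_j$ set equal to $1$ produces the \emph{same} signature $(2\alpha - 1, 2\beta, 0)$. Lemma \ref{Lem1}(2) then gives $(\alpha, \beta, 0) \cong_{QC} (\alpha - 1, \beta + 1, 0)$. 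Iterating drops the first coordinate to $1$, yielding $(\alpha, \beta, 0) \cong_{QC} (1, \alpha + \beta - 1, 0)$, and Example \ref{Ej1}(1) with $\hat\alpha = \hat\beta = 1$ shows $(1, m, 0) \cong_{QC} (1, 1, 0)$ for every $m \geq 1$.

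The main obstacle is the iterative step just described. For $\alpha \geq \beta + 2$, neither a single application of Construction A to $(\alpha, \beta, 0)$ nor the direct use of Example \ref{Ej1}(2) can produce a subgroup signature $(\alpha', \beta', 0)$ with $\alpha' \leq \beta' + 1$, which would give the ``quick'' equivalence to $(1, 1, 0)$. The Nielsen-type bridge via the common subgroup signature $(2\alpha - 1, 2\beta, 0)$ trades one loxodromic generator for one parabolic generator per iteration, eventually reaching a signature to which Example \ref{Ej1}(1) applies.
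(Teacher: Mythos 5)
Your argument is correct, and its skeleton is the same as the paper's: reduce every signature with $\beta\gamma=0$ to one of the three base signatures $(2,0,0)$, $(0,2,0)$, $(0,0,2)$ by exhibiting finite-index ST subgroups via the Constructions and invoking Lemma \ref{Lem1}, with the pairwise distinctness coming from Remark \ref{Observa4}. (For the record, the ``only if'' directions need both Remark \ref{Observa4} \emph{and} the ``if'' directions, since one must first know that an arbitrary $(\alpha,\beta,\gamma)$ with $\beta\gamma=0$ lands in one of the three classes; but that is how the paper argues as well.) The one place where you genuinely diverge is the case $\alpha,\beta\geq 1$ of part (2). The paper splits into $\alpha\leq\beta+1$, handled directly by Example \ref{Ej1}(2) with $r=\alpha-1$, $n=\alpha+\beta-1$, and $\beta\leq\alpha+1$, which it delegates to the somewhat sketchy Remark \ref{Const4}. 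Your iterative move --- realizing $(2\alpha-1,2\beta,0)$ as an index-two subgroup signature of both $(\alpha,\beta,0)$ (Construction A, $n=2$, all $r_j=0$) and $(\alpha-1,\beta+1,0)$ (Construction A, $n=2$, two $r_j$ equal to $1$; note $\hat\alpha=\alpha-1\geq 1$ and $\beta+1\geq 2$, so the construction applies) --- gives $(\alpha,\beta,0)\cong_{QC}(\alpha-1,\beta+1,0)$ uniformly, and iterating down to $(1,\alpha+\beta-1,0)$ followed by Example \ref{Ej1}(1) closes the loop with no case split and no appeal to Remark \ref{Const4}. I checked the arithmetic: $1+2(\alpha-2)+2=2\alpha-1$ and $2(\beta+1)-2=2\beta$, so the two subgroup signatures do coincide, and the symmetric version with $s_t$ in place of $r_j$ carries part (3) through as you claim. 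Your treatment of part (1) is also a slight variant (a common subgroup of signature $(2\alpha-1,0,0)$ rather than the paper's direct $(\alpha,0,0)\leq_{\alpha-1}(2,0,0)$), but both are immediate from Example \ref{Ej1}(1). In short: same machinery, but your handling of the $\alpha\geq\beta+2$ regime is more explicit and self-contained than the paper's.
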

\begin{proof}
(1) Assume $\beta=\gamma=0$, so $\alpha \geq 2$. If, in Example \ref{Ej1}(1), we set $(\hat{\alpha},\hat{\beta},\hat{\gamma})=(2,0,0)$ and $n=\alpha-1$, then we obtain a Schottky group of rank $\alpha$ as an index $n$ subgroup of a Schottky group of rank two, so $(\alpha,0,0) \cong_{QC} (2,0,0)$. 

(2) Assume $\gamma=0$, $\beta \geq 1$ and $\alpha+\beta \geq 2$. If $\alpha=0$, then $\beta \geq 2$ and this follows from Example \ref{Ej2}(1) by taking $(\hat{\alpha},\hat{\beta},\hat{\gamma})=(0,2,0)$ and $n=\beta-1$. So, let us now assume $\alpha \geq 1$.
By Example \ref{Ej2}(2),  $(1,2,0) \cong_{QC} (0,2,0)$ and, by Example \ref{Ej1}(1), $(1,2,0) \cong_{QC} (1,1,0)$. So, by Lemma \ref{Lem1},  $(0,2,0) \cong_{QC} (1,1,0)$. Now, if $\alpha \leq \beta+1$, then if Example \ref{Ej1}(4) we take $r=\alpha-1$ and $n=\alpha+\beta-1$, then $(\alpha,\beta,0)=(1+r,n-r,0) \cong_{QC} (1,1,0)$. 
If $\beta \leq \alpha+1$, then it follows from Remark \ref{Observa4} and the previous case.

(3) Assume $\beta=0$ and $\gamma \geq 1$. If $\alpha=0$, so $\gamma \geq 2$, then it follows from Example \ref{Ej3}(1), by taking $(\hat{\alpha},\hat{\beta},\hat{\gamma})=(0,0,2)$ and $n =\gamma- 1$, that there is a ST group of signature $(0,0,\gamma)$ as an index $n$ subgroup of a Schottky type group of signature $(0,0,2)$. Let us now assume that $\alpha \geq 1$.
By Example \ref{Ej1}(1), $(1,0,2) \cong_{QC} (1,0,1)$ and, 
by Example \ref{Ej3}(2), $(1,0,2) \cong_{QC} (0,0,2)$. This asserts that $(0,0,2) \cong_{QC} (1,0,1)$.
Now, similarly as done in the previous case (2), but now applying Example \ref{Ej1}(3) (and Remark \ref{Observa4}), we may obtain $(\alpha,0,\gamma)\cong_{QC}(1,0,1)$.
\end{proof}

%%%%%%%%%%%%%%%
%%%%%%%%%%%%%%%
\subsection{Case $\beta\gamma \neq 0$: Proof of part (5) of Theorem \ref{Main}}
Now, let us consider a signature of type $(\alpha,\beta,\gamma)$, where $\beta,\gamma \geq 1$. 
The following observation permits us to assume, up to quasiconformal equivalence, that $\beta=\gamma$. 

\begin{prop}[Part (5) of Theorem \ref{Main}]\label{Lem3}
Let $\beta,\gamma \geq 1$ be integers and set $M={\rm Max}\{\beta,\gamma\}$. Then there is some $\tilde{\alpha} \geq 2\alpha+\beta+\gamma-2 \geq 1$ such that 
$(\alpha,\beta,\gamma) \cong_{QC} (\tilde{\alpha},M,M)$.
\end{prop}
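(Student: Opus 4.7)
The plan is to construct an explicit chain of quasiconformal equivalences taking $(\alpha,\beta,\gamma)$ to $(\tilde\alpha,M,M)$ by iterated use of Example \ref{Ej1}(5), and to secure the lower bound on $\tilde\alpha$ by a final boosting step. By the symmetry between Constructions B and C, I may assume $\beta \leq \gamma$ so that $M=\gamma$. The key tool, obtained from Example \ref{Ej1}(5) together with Lemma \ref{Lem1}(1), is that for $\alpha \geq 1$ and any $r\in\{0,\dots,\beta\}$, $s\in\{0,\dots,\gamma\}$,
$$(\alpha,\beta,\gamma) \cong_{QC} (2\alpha+\beta+\gamma-r-s-1,\ \beta+r,\ \gamma+s).$$
Two specializations will drive the argument: the ``doubling'' $(r,s)=(\beta,0)$, which replaces $(\alpha,\beta,\gamma)$ by $(2\alpha+\gamma-1,2\beta,\gamma)$, and the ``boost'' $(r,s)=(0,0)$, which replaces it by $(2\alpha+\beta+\gamma-1,\beta,\gamma)$.

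First I would reduce to the case $\alpha \geq 1$: if $\alpha = 0$ and $(\beta,\gamma) \neq (1,1)$, then $\gamma \geq 2$ and Example \ref{Ej3}(3) yields $(0,\beta,\gamma) \cong_{QC} (\beta+\gamma-1,\beta,\gamma)$, while the remaining case $(0,1,1)$ is already of the desired form $(\tilde\alpha,M,M)$ with $\tilde\alpha=0 \geq 2\alpha+\beta+\gamma-2 = 0$. Next, with $\alpha \geq 1$, I would iterate the doubling: let $k \geq 0$ be the unique integer with $2^k\beta \leq \gamma \leq 2^{k+1}\beta$. Then $k$ successive doublings produce $(\alpha_k,2^k\beta,\gamma)$ with $\alpha_k = 2^k\alpha + (2^k-1)(\gamma-1)$, and one final application of Example \ref{Ej1}(5) with the balancing choice $r=\gamma-2^k\beta\in\{0,\dots,2^k\beta\}$ and $s=0$ lands at $(\tilde\alpha_0,\gamma,\gamma)$ with $\tilde\alpha_0 = 2\alpha_k+2^{k+1}\beta-1$. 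Finally, I would apply the boost to $(\tilde\alpha_0,\gamma,\gamma)$ as often as needed (each application replaces the first coordinate by $2\tilde\alpha+2\gamma-1$ while preserving the other two) to enforce $\tilde\alpha \geq 2\alpha+\beta+\gamma-2$.

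The main obstacle I anticipate is the combinatorial bookkeeping: one must verify that the admissibility condition $r\in\{0,\dots,2^k\beta\}$ for the final balancing step is exactly $2^k\beta \leq \gamma \leq 2^{k+1}\beta$, and then estimate the resulting $\tilde\alpha$. For $k\geq 1$ a short computation shows $\tilde\alpha_0 = 2^{k+1}(\alpha+\beta) + (2^{k+1}-2)(\gamma-1) - 1$, which already comfortably exceeds $2\alpha+\beta+\gamma-2$, so no boosting is needed. The delicate case is $k=0$ (i.e.\ $\beta \leq \gamma \leq 2\beta$), where $\tilde\alpha_0 = 2\alpha+2\beta-1$ can fall short of the required bound by up to $\gamma - \beta - 1$; a single boost to $4\alpha+4\beta+2\gamma-3$ then exceeds the bound for all remaining parameter values.
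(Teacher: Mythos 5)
Your proof is correct and follows essentially the same route as the paper: reduce to $\alpha \geq 1$ (the paper uses Example \ref{Ej2}(5), you use Example \ref{Ej3}(3)), then iterate Example \ref{Ej1}(5) with index-two subgroups to raise the smaller of $\beta,\gamma$ up to $M$. The only difference is bookkeeping: the paper takes $(r,s)=(1,0)$ at every step (unit increments, $\gamma-\beta$ iterations), which makes the first iterate already equal to $2\alpha+\beta+\gamma-2$ so no separate boost is needed, whereas you use a doubling-plus-balancing schedule with a final boost --- a harmless variant of the same argument.
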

\begin{proof}
By Examples \ref{Ej2}(5), up to quasiconformal equivalence, we may assume $\alpha \geq 1$. 

 Set $\alpha_{0}=\alpha$, $\beta_{0}=\beta$ and $\gamma_{0}=\gamma$. 
If $\beta=\gamma$, then we are done.
Let us assume $\beta<\gamma$.
Note that, by Example \ref{Ej1}(5) applied to a triple $(\alpha_{k},\beta_{k},\gamma_{k})$ (for each $k \geq 0$), with $r=1$ and $s=0$, produces a triple $(\alpha_{k+1},\beta_{k+1},\gamma_{k+1})$, where
$\alpha_{k+1}=2\alpha_{k}+\beta_{k}+\gamma_{k}-2$, $\beta_{k+1}=\beta_{k}+1$ and $\gamma_{k+1}=\gamma_{k}$. We note that 
$\gamma_{k+1}=\gamma$, $\beta_{k+1}=\beta+k+1$ and $\alpha_{k+1}=2\alpha_{k}+\beta+\gamma+k-1$.
So, if we take $k=\gamma-\beta-1$, then we obtain the triple $(\alpha_{\gamma},\gamma,\gamma)$ as desired.
If $\gamma<\beta$, then we proceed in the same way as before by taking $r=0$ and $s=1$ in Example \ref{Ej1}(5).
\end{proof}

As a consequence of Proposition \ref{Lem3}, we only need to consider those signatures of the form $(\alpha,\gamma,\gamma)$, where $\alpha,\gamma \geq 1$.

Let us recall that (see Remark \ref{observa3}) that $(1,1,1) \cong_{QC} (0,1,1)$ and, by construction \ref{ConstA} that $(1,\gamma,\gamma) \cong_{QC} (1,1,1)$, for $\gamma \geq 1$.

\begin{lemm}\label{Lem4}
Let $\hat{\gamma} \geq 1$.
\begin{enumerate}
\item If $0 \leq r \leq \hat{\gamma}/2$, then $(1+2r,\hat{\gamma}-r,\hat{\gamma}-r) \cong_{QC} (0,1,1)$.

\item If $0 \leq r \leq \hat{\gamma}$, then $(2\hat{\gamma}-2r+1,\hat{\gamma}+r,\hat{\gamma}+r) \cong_{QC} (0,1,1)$.
\end{enumerate}
\end{lemm}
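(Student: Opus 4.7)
The plan is to deduce both parts from a single construction, namely Example \ref{Ej1}(4), which produces the inclusion $(1+2r,n-r,n-r) \leq_n (1,1,1)$ whenever $0 \leq r \leq n/2$. Combined with the equivalence $(1,1,1) \cong_{QC} (0,1,1)$ recorded in Remark \ref{observa3} and with Lemma \ref{Lem1}(1), this is all that is needed.

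For part (1), I would fix an ST group $\Gamma$ of signature $(0,1,1)$. By Remark \ref{observa3}, $\Gamma$ contains an index-two ST subgroup $\Gamma'$ of signature $(1,1,1)$. For the given $\hat\gamma \geq 1$ and $0 \leq r \leq \hat\gamma/2$, applying Example \ref{Ej1}(4) to $\Gamma'$ with $n = \hat\gamma$ and $s_{1} = r_{1} = r$ yields an ST subgroup of $\Gamma'$ of index $\hat\gamma$ and signature $(1+2r,\hat\gamma-r,\hat\gamma-r)$ (the edge case $\hat\gamma=1$ forces $r=0$, and the claim reduces directly to Remark \ref{observa3}). Chaining inclusions, $\Gamma$ contains this group as a finite-index ST subgroup; since $\Gamma$ also contains itself as a finite-index ST subgroup of signature $(0,1,1)$, Lemma \ref{Lem1}(1) provides $(1+2r,\hat\gamma-r,\hat\gamma-r) \cong_{QC} (0,1,1)$.

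For part (2), I would set $s := \hat\gamma - r$. The target signature then rewrites as $(2\hat\gamma-2r+1,\hat\gamma+r,\hat\gamma+r) = (1+2s,2\hat\gamma-s,2\hat\gamma-s)$, and the hypothesis $0 \leq r \leq \hat\gamma$ becomes $0 \leq s \leq \hat\gamma = (2\hat\gamma)/2$. This is precisely the hypothesis of part (1) after replacing $\hat\gamma$ with $2\hat\gamma$, so applying part (1) with these new parameters yields the desired equivalence.

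Because the argument is essentially bookkeeping on top of Construction A from Section \ref{Sec:Construcciones}, no step is a genuine obstacle; the single point worth verifying carefully is that the substitution $s = \hat\gamma - r$ in part (2) maps the asserted range $0 \leq r \leq \hat\gamma$ bijectively onto the admissible range $0 \leq s \leq (2\hat\gamma)/2$ for part (1), which it does exactly.
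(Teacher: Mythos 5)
Your proof is correct. Part (1) coincides with the paper's argument: both invoke Example \ref{Ej1}(4) with $n=\hat{\gamma}$, $r_{1}=s_{1}=r$ to realize an ST group of signature $(1+2r,\hat{\gamma}-r,\hat{\gamma}-r)$ as a finite-index subgroup of one of signature $(1,1,1)$, and then pass to $(0,1,1)$ via Remark \ref{observa3} and Lemma \ref{Lem1}. For part (2), however, you take a genuinely different route. The paper applies Example \ref{Ej1}(5) (with $n=2$ and equal parameters $r=s$) to the signature $(1,\hat{\gamma},\hat{\gamma})$, obtaining $(2\hat{\gamma}-2r+1,\hat{\gamma}+r,\hat{\gamma}+r)\leq_{2}(1,\hat{\gamma},\hat{\gamma})$, and then chains $(1,\hat{\gamma},\hat{\gamma})\cong_{QC}(1,1,1)\cong_{QC}(0,1,1)$. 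You instead observe that the substitution $s=\hat{\gamma}-r$ rewrites the target as $(1+2s,2\hat{\gamma}-s,2\hat{\gamma}-s)$ with $0\leq s\leq(2\hat{\gamma})/2$, so that part (2) is literally an instance of part (1) with $\hat{\gamma}$ replaced by $2\hat{\gamma}$. Your reduction is valid (the range check is exact, as you note) and has the virtue of exposing that the family of signatures in part (2) is contained in that of part (1), making the second statement logically redundant once the first is proved; the paper's version, by contrast, keeps the two parts tied to two different specializations of Construction A (Examples \ref{Ej1}(4) and \ref{Ej1}(5)), which mirrors how they are later cited. Either way the content is the same bookkeeping over Construction A, and no step in your argument fails.
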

\begin{proof}
Part (1) follows from Example \ref{Ej1}(4) and 
part (2) follows from Example \ref{Ej1}(5): if $0 \leq r \leq \hat{\gamma}$, then $(2\hat{\gamma}-2r+1,\hat{\gamma}+r,\hat{\gamma}+r) \cong_{QC} (1,\hat{\gamma},\hat{\gamma}) \cong_{QC} (1,1,1) \cong_{QC} (0,1,1)$. 
\end{proof}

\begin{prop}\label{Lem5}
If $\gamma \geq 1$ and $\alpha \leq 2\gamma+1$, then $(\alpha,\gamma,\gamma) \cong_{QC} (0,1,1)$.
\end{prop}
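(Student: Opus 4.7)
The plan is to reduce the statement to Lemma \ref{Lem4}(1), which already handles signatures of the form $(1+2r, \hat{\gamma}-r, \hat{\gamma}-r)$ with $r \leq \hat{\gamma}/2$. The key observation is that Construction A applied with $n=2$ always yields an \emph{odd} first coordinate of the form $2\alpha-1$, which is exactly what Lemma \ref{Lem4}(1) consumes, and the arithmetic hypothesis $\alpha \leq 2\gamma+1$ is precisely what makes the required inequality in that lemma hold after doubling.

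First suppose $\alpha \geq 1$. I would apply Example \ref{Ej1}(6) with $n=2$ and $r=0$: this produces an ST group of signature $(2\alpha-1, 2\gamma, 2\gamma)$ as an index-$2$ subgroup of an ST group of signature $(\alpha, \gamma, \gamma)$. By Lemma \ref{Lem1}(1), this gives $(\alpha,\gamma,\gamma) \cong_{QC} (2\alpha-1, 2\gamma, 2\gamma)$. Now I apply Lemma \ref{Lem4}(1) with the parameters $\hat{\gamma}' = 2\gamma + \alpha - 1$ and $r' = \alpha - 1$, so that $(1+2r', \hat{\gamma}'-r', \hat{\gamma}'-r') = (2\alpha-1, 2\gamma, 2\gamma)$. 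The required inequality $r' \leq \hat{\gamma}'/2$ becomes $\alpha - 1 \leq (2\gamma+\alpha-1)/2$, which simplifies to exactly the standing hypothesis $\alpha \leq 2\gamma+1$. Hence $(2\alpha-1, 2\gamma, 2\gamma) \cong_{QC} (0,1,1)$, proving the claim in this case.

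It remains to handle $\alpha = 0$. If $\gamma = 1$, the claim is tautological. If $\gamma \geq 2$, I would apply Example \ref{Ej2}(5) with $\hat{\beta} = \hat{\gamma} = \gamma$ (the condition $\hat{\beta} \geq 2$ is satisfied), producing $(2\gamma-1, \gamma, \gamma) \leq_2 (0, \gamma, \gamma)$; by Lemma \ref{Lem1}(1), $(0, \gamma, \gamma) \cong_{QC} (2\gamma-1, \gamma, \gamma)$. Since $1 \leq 2\gamma - 1 \leq 2\gamma+1$, the case $\alpha \geq 1$ just proved applies to $(2\gamma-1, \gamma, \gamma)$ and yields $(2\gamma-1, \gamma, \gamma) \cong_{QC} (0,1,1)$, completing the argument.

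I do not anticipate any real obstacle here beyond bookkeeping; the whole proof is a single application of each of Examples \ref{Ej1}(6) and \ref{Ej2}(5) together with Lemma \ref{Lem4}(1). The only point worth double-checking is the arithmetic in the inequality $r' \leq \hat{\gamma}'/2$ after doubling, where the threshold $2\gamma+1$ in the hypothesis is exactly tight.
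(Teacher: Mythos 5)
Your proof is correct and follows essentially the same route as the paper: apply Construction A with $n=2$ to pass from $(\alpha,\gamma,\gamma)$ to $(2\alpha-1,2\gamma,2\gamma)$, observe that the hypothesis $\alpha\leq 2\gamma+1$ translates exactly into the inequality $r\leq\hat{\gamma}/2$ needed for Lemma \ref{Lem4}(1), and settle $\alpha=0$ via Example \ref{Ej2}(5). The only cosmetic difference is that the paper first reduces to odd $\alpha$ before invoking Lemma \ref{Lem4}(1), whereas you apply it directly to the doubled signature (and you also make explicit the trivial subcase $\alpha=0$, $\gamma=1$, which the paper glosses over).
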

\begin{proof}
Let us first consider the case $\alpha \geq 1$.
By Example \ref{Ej1}(1), $(\alpha,\gamma,\gamma) \cong_{QC} (\tilde{\alpha},\tilde{\gamma},\tilde{\gamma})= (1+2(\alpha-1),2\gamma,2\gamma)$. Note that 
$\tilde{\alpha} \leq 2\tilde{\gamma}+1$. So, we may restrict to assume that $\alpha \geq 1$ is odd.
Set $\alpha=1+2r$ and $\hat{\gamma}=\gamma+r$. The condition that $\alpha \leq 2\gamma+1$ is equivalent to 
$r \leq \hat{\gamma}/2$. Then part (1) of Lemma \ref{Lem4} asserts that $(\alpha,\gamma,\gamma)=(1+2r,\hat{\gamma}-r,\hat{\gamma}-r) \cong_{QC} (0,1,1)$.
Now, if  $\alpha=0$, then, by Examples \ref{Ej2}(5) and \ref{Ej3}(3), $(0,\gamma,\gamma) \cong_{QC} (2\gamma-1,\gamma,\gamma) \cong_{QC} (0,1,1)$.
\end{proof}

The above proposition takes care of part (5) of our theorem in the case $\alpha \leq 2\gamma+1$. So, we now need to take care of those signatures $(\alpha,\gamma,\gamma)$ where $\alpha>2\gamma+1$. Moreover, by Examples \ref{Ej2}(5) and \ref{Ej3}(3), we only need to take care of the cases when $\alpha$ is odd. 

\begin{lemm}\label{Lem6}
If $\alpha \geq 2\gamma+1$ is odd, where $\gamma \geq 1$, then $(\alpha,\gamma,\gamma) \cong_{QC} (\frac{\alpha+1-2\gamma}{2},\gamma,\gamma)$.
\end{lemm}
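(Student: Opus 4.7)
The plan is to realize the lemma as a direct application of Example \ref{Ej1}(6), followed by Lemma \ref{Lem1}(1). Set $\hat\alpha := \frac{\alpha+1-2\gamma}{2}$. Since $\alpha$ is odd and $\alpha\geq 2\gamma+1$, the number $\hat\alpha$ is a positive integer, so Construction A applies to an ST group of signature $(\hat\alpha,\gamma,\gamma)$. In that construction, I will choose $n=2$ and $r=\gamma$; the condition $r\in\{0,1,\ldots,\gamma n/2\}=\{0,\ldots,\gamma\}$ is satisfied with equality, and each individual $r_j$ and $s_t$ can be taken equal to $1$.

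Plugging $n=2$ and $r=\gamma$ into the signature formula of Example \ref{Ej1}(6) gives the child signature
\[
\bigl(1+n(\hat\alpha-1)+2r,\; n\gamma-r,\; n\gamma-r\bigr) \;=\; \bigl(2\hat\alpha-1+2\gamma,\;\gamma,\;\gamma\bigr) \;=\; (\alpha,\gamma,\gamma),
\]
where the last equality uses $2\hat\alpha = \alpha+1-2\gamma$. Thus Construction A exhibits an ST group of signature $(\alpha,\gamma,\gamma)$ as an index-$2$ subgroup of an ST group of signature $(\hat\alpha,\gamma,\gamma)$, i.e., $(\alpha,\gamma,\gamma) \leq_{2} (\hat\alpha,\gamma,\gamma)$. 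By Lemma \ref{Lem1}(1), this containment of ST groups (with matching regions of discontinuity) forces $(\alpha,\gamma,\gamma)\cong_{QC}(\hat\alpha,\gamma,\gamma)$, which is the stated equivalence.

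There is no real obstacle: the lemma is essentially a translation of the specific slice $(n,r)=(2,\gamma)$ of Example \ref{Ej1}(6). The only things to check are that $\hat\alpha\geq 1$ (so the construction is legal), which follows from $\alpha\geq 2\gamma+1$, and that $r=\gamma$ is admissible, which follows from $\gamma\leq \gamma n/2 = \gamma$. Oddness of $\alpha$ is exactly what makes $\hat\alpha$ an integer, so the hypotheses of the lemma are tuned precisely to make the construction legitimate.
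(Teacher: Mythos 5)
Your proof is correct and is essentially the paper's argument: the paper applies Example \ref{Ej1}(5) to $(a,\gamma,\gamma)$ with $r=s=0$ and $n=2$, which is exactly the same instance of Construction A (all $r_j=s_t=1$) as your Example \ref{Ej1}(6) with $n=2$, $r=\gamma$, yielding the same index-two containment $(\alpha,\gamma,\gamma)\leq_{2}(\tfrac{\alpha+1-2\gamma}{2},\gamma,\gamma)$.
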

\begin{proof}
Let $a=(\alpha+1-2\gamma)/2$ and consider the signature $(a,\gamma,\gamma)$. By Example \ref{Ej1}(5), applied to the signature $(a,\gamma,\gamma)$ and $r=s=0$, we obtain that
$(a,\gamma,\gamma) \cong_{QC} (\alpha,\gamma,\gamma)$.
\end{proof}

\begin{example}
Proposition \ref{Lem5} asserts that $(\alpha,1,1) \cong_{QC} (0,1,1)$ under the assumption that $\alpha \in \{0,1,2,3\}$. 
(i) Let us consider the signature $(4,1,1)$. 
By Example \ref{Ej1}(1), applied to $(4,1,1)$ and $n=2$, we have that $(4,1,1) \cong_{QC} (7,2,2)$. Now, Lemma \ref{Lem6} asserts that $(7,2,2) \cong_{QC} (2,2,2)$. By Proposition \ref{Lem5}, $(2,2,2) \cong_{QC} (0,1,1)$. So, $(4,1,1) \cong_{QC} (0,1,1)$.

(ii) Let us now consider the signature $(5,1,1)$. By Lemma \ref{Lem6}, $(5,1,1) \cong_{QC} (2,1,1)$. By Proposition \ref{Lem5}, $(2,1,1) \cong_{QC} (0,1,1)$.  So, $(5,1,1) \cong_{QC} (0,1,1)$.

\end{example}

The ideas in the above example can be generalized to other cases.

\begin{prop}\label{Lem7}
If $\alpha \geq 2\gamma+1$, $\gamma \geq 1$, then $(\alpha,\gamma,\gamma) \cong_{QC} (0,1,1)$.
\end{prop}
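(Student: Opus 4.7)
The plan is strong induction on $\alpha$, where Proposition~\ref{Lem5} serves as the base case whenever the first coordinate is driven down to at most $2\gamma+1$. Fix $\gamma \geq 1$, assume the statement $(\alpha', \gamma', \gamma') \cong_{QC} (0,1,1)$ is known for every pair with $\gamma' \geq 1$ and $\alpha' < \alpha$, and suppose $\alpha \geq 2\gamma+2$. We split on the parity of $\alpha$.

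If $\alpha$ is odd (so in fact $\alpha \geq 2\gamma+3$), Lemma~\ref{Lem6} applies directly and yields
\[
(\alpha, \gamma, \gamma) \cong_{QC} \left(\tfrac{\alpha+1-2\gamma}{2}, \gamma, \gamma\right).
\]
Since $\gamma \geq 1$, the new first coordinate is at most $\tfrac{\alpha-1}{2} < \alpha$, so the inductive hypothesis closes this case. If $\alpha$ is even, we first make the first coordinate odd by applying Example~\ref{Ej1}(1) with $n=2$ and $(\hat\alpha, \hat\beta, \hat\gamma)=(\alpha, \gamma, \gamma)$, which gives
\[
(\alpha, \gamma, \gamma) \cong_{QC} (2\alpha-1, 2\gamma, 2\gamma).
\]
The assumption $\alpha \geq 2\gamma+2$ forces $2\alpha-1 \geq 4\gamma+3 > 2(2\gamma)+1$, so Lemma~\ref{Lem6} can now be applied (with $2\gamma$ in place of $\gamma$) and produces
\[
(2\alpha-1, 2\gamma, 2\gamma) \cong_{QC} (\alpha - 2\gamma, 2\gamma, 2\gamma).
\]
As $\gamma \geq 1$, we have $\alpha - 2\gamma < \alpha$, so the inductive hypothesis (with new second coordinate $2\gamma$) closes this case as well.

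There is no genuine obstacle here: the only subtlety is that Lemma~\ref{Lem6} requires the first coordinate to be odd, which forces us to route the even case through the doubling step supplied by Example~\ref{Ej1}(1) before Lemma~\ref{Lem6} becomes usable. The strict decrease of the first coordinate in both cases, combined with Proposition~\ref{Lem5} as the base, guarantees that the induction terminates regardless of how $\gamma$ grows during the reduction.
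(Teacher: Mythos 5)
Your proof is correct and takes essentially the same route as the paper: both reduce the signature by applying Example~\ref{Ej1}(1) with $n=2$ followed by Lemma~\ref{Lem6} to pass from $(\alpha,\gamma,\gamma)$ to $(\alpha-2\gamma,2\gamma,2\gamma)$, iterating until Proposition~\ref{Lem5} applies. The only difference is organizational: you recast the paper's ``continuing with this process'' as an explicit strong induction on the first coordinate (and skip the doubling step when $\alpha$ is already odd), which makes the termination argument precise.
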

\begin{proof}
By Example \ref{Ej1}(1), applied to $(\alpha,\gamma,\gamma)$ and $n=2$, we have that $(\alpha,\gamma,\gamma) \cong_{QC} (1+2(\alpha-1),2\gamma,2\gamma))=(2\alpha-1,2\gamma,2\gamma)$. Now, by Lemma \ref{Lem6}, $(2\alpha-1,2\gamma,2\gamma)\cong_{QC} ((2\alpha-4\gamma)/2,2\gamma,2\gamma)=(\alpha-2\gamma,2\gamma,2\gamma)=(\alpha-\tilde{\gamma},\tilde{\gamma},\tilde{\gamma})$, for $\tilde{\gamma}=2\gamma$. Continuing with this process, applied to the last signature, we may obtain a signature $(\hat{\alpha},\hat{\gamma},\hat{\gamma})$ for which $\hat{\alpha} \leq 2\hat{\gamma}+1$.
By Proposition \ref{Lem5}, $(\hat{\alpha},\hat{\gamma},\hat{\gamma}) \cong_{QC} (0,1,1)$. 
\end{proof}

%%%%%%%%%%%%%%%%%

\begin{figure}[ht]
\centering
\includegraphics[width=0.3\linewidth]{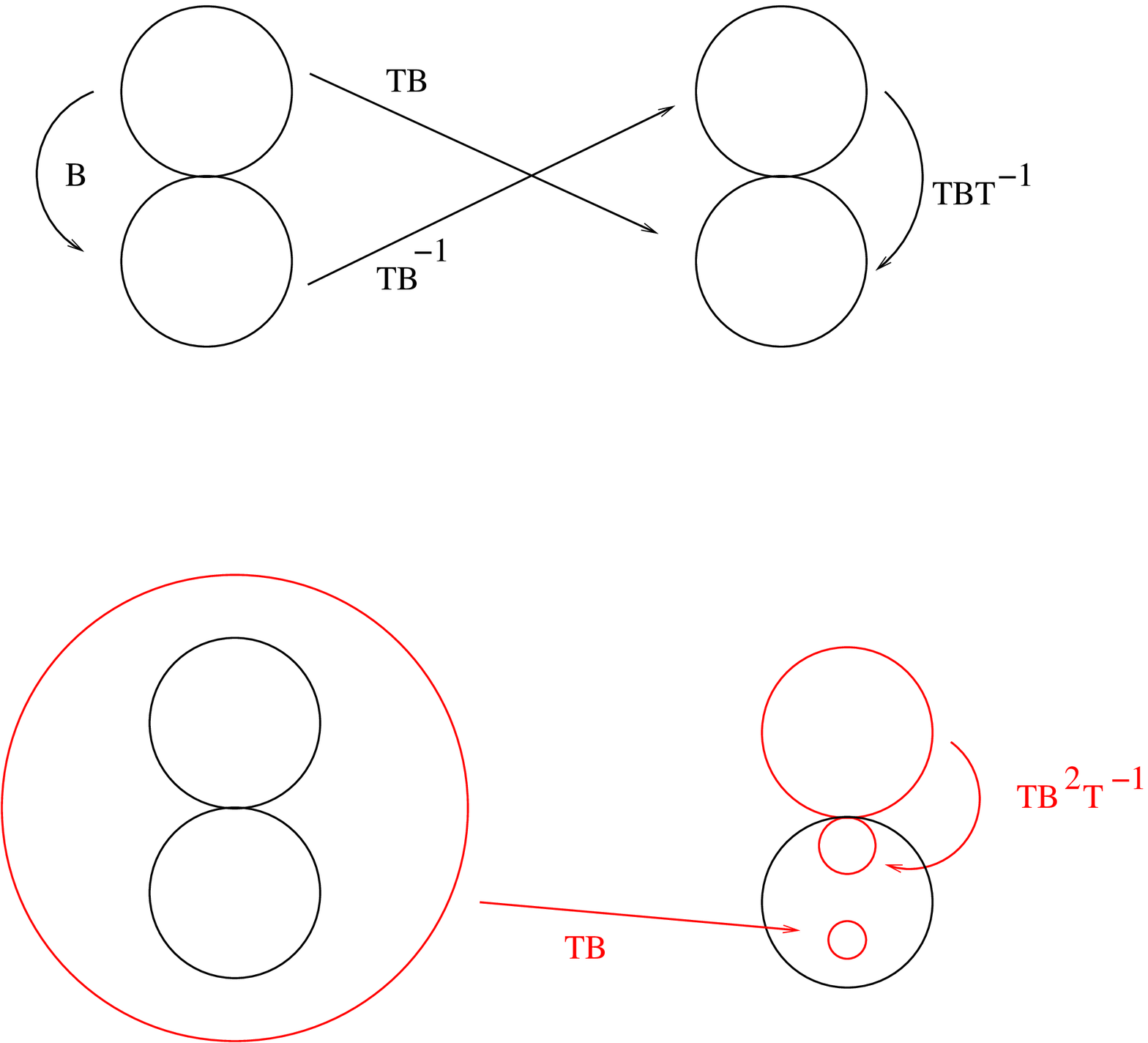}
\quad\quad
\includegraphics[width=0.3\linewidth]{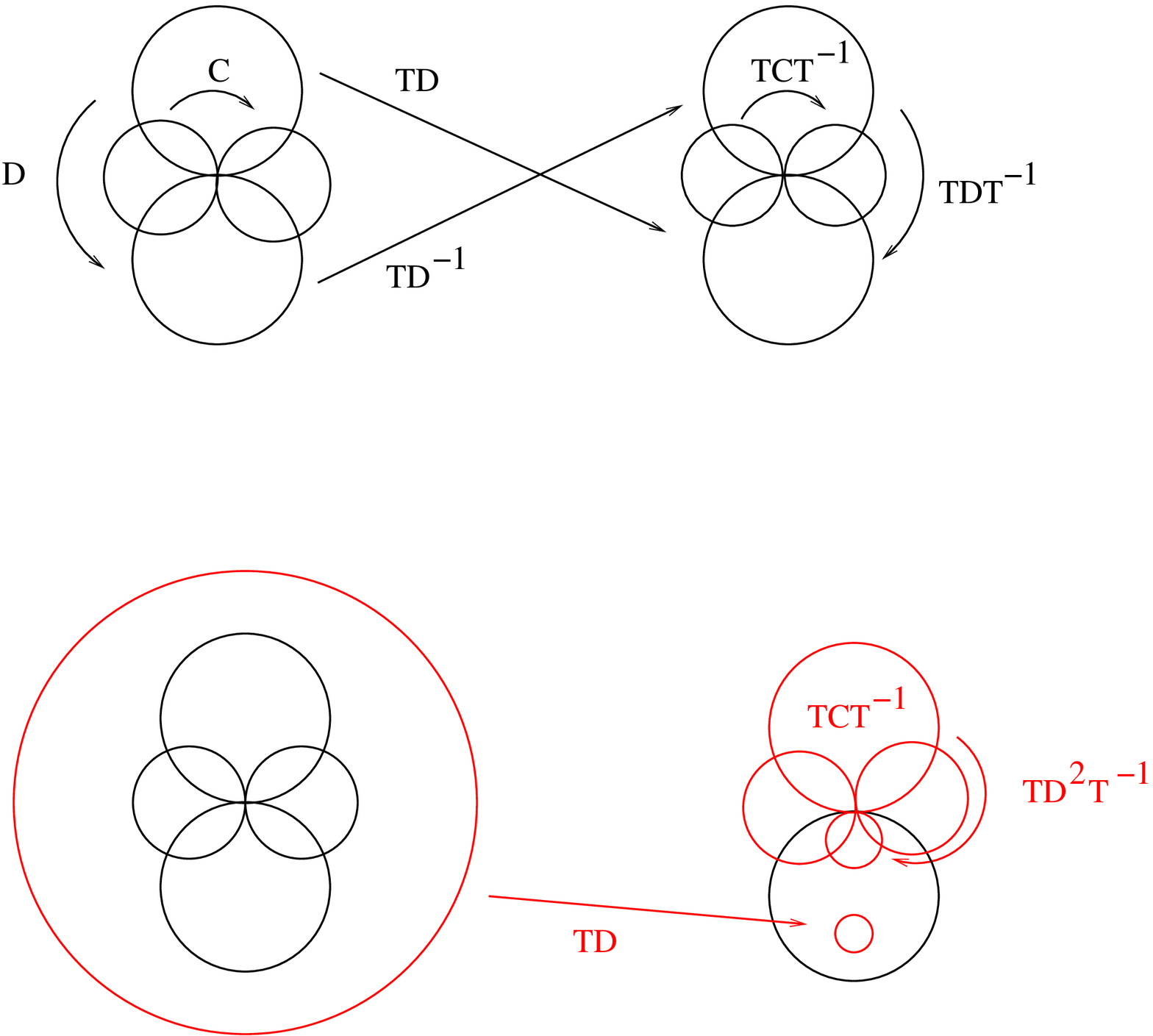}
\caption{Cases (1) and (2) in Lemma \ref{lemita}}
\label{Figura1}
\end{figure}

%%%%%%%%%%%%%%%%%%
%%%%%%%%%%%%%%%%%%

\end{document}